\title[A multiscale method for linear elasticity]{A multiscale method for linear elasticity reducing Poisson locking }
\author[Patrick Henning]{Patrick Henning\textsuperscript{1}} 
\author[Anna Persson]{Anna Persson\textsuperscript{2}}
\newtheorem{definition}{Definition}[section]
\newtheorem{theorem}[definition]{Theorem}
\newtheorem{lemma}[definition]{Lemma}
\theoremstyle{remark}
\newtheorem{remark}[definition]{Remark}
\newtheorem*{assump}{Assumptions}
\numberwithin{equation}{section}
\DeclareMathOperator{\supp}{supp}
\DeclareMathOperator{\diam}{diam}
\DeclareMathOperator{\card}{card}
\DeclareMathOperator{\meas}{meas}
\newcommand{\ms}{\mathrm{ms}}
\newcommand{\f}{\mathrm{f}}
\newcommand{\R}{\mathbb{R}}
\newcommand{\B}{\mathcal{B}}
\begin{document}
\begin{abstract}
We propose a generalized finite element method for linear elasticity equations with highly varying and oscillating coefficients. The method is formulated in the framework of localized orthogonal decomposition techniques introduced by M{\aa}lqvist and Peterseim (Math. Comp., 83(290): 2583--2603, 2014). Assuming only $L_\infty$-coefficients we prove linear convergence in the $H^1$-norm, also for materials with large Lam\'{e} parameter $\lambda$. The theoretical a priori error estimate is confirmed by numerical examples.
\end{abstract}

\maketitle
\footnotetext[1]{Department of Mathematics, KTH Royal Institute of Technology, SE-100 44 Stockholm, Sweden.}
\footnotetext[2]{Department of Mathematical Sciences, Chalmers University of Technology and University of Gothenburg, SE-412 96 G\"{o}teborg, Sweden.}

\section{Introduction}
In this paper we study numerical solutions to linear elasticity equations with highly varying coefficients. Such equations typically occur when modeling the deformation of a heterogeneous material, for instance a composite material. Problems with this type of coefficients are commonly referred to as \textit{multiscale} problems.

The convergence of classical finite element methods based on continuous piecewise polynomials depends on (at least) the spatial $H^2$-norm of the solution $u$. However, for problems with multiscale features this norm may be very large. Indeed, if the coefficient varies at a scale of size $\epsilon$, then $\|u\|_{H^2}\sim \epsilon^{-1}$. Thus, to achieve convergence the mesh size must be small ($h<\epsilon$). In many applications this condition leads to issues with computational cost and available memory. To overcome this difficulty several methods have been proposed, where we refer to \cite{Adbulle06,MR3043485,MR2229849,MR3407262} for multiscale methods particularly addressing elasticity problems.

Generalized finite element methods (GFEM, cf. \cite{Babuska83}) belong to the class of Galerkin methods. Instead of constructing the finite dimensional solution space from standard shape functions, a generalized finite element approach is based on constructing a set of locally supported basis functions (not necessarily piecewise polynomials) that incorporate additional information about the structure of the original problem. This strategy can enhance the local approximation properties significantly. In this paper we propose a GFEM based on the ideas in \cite{Malqvist14}, often referred to as \textit{localized orthogonal decomposition} (LOD). 
The methodology of the LOD arose from the framework of the Variational Multiscale Method (VMM) originally proposed by Hughes et al. \cite{Hughes98,HuS07} as a tool for stabilizing finite element methods that perform bad due to an under-resolution of relevant microscopic data. The stabilization was achieved by using a Petrov-Galerkin formulation of the problem with a standard finite element space as trial space and a generalized finite element space for the test-functions.
The concept was reinterpreted and specialized in \cite{Larson07,Mal11} to elliptic homogenization problems. A short time later, the first rigorous analysis was provided in \cite{Malqvist14} by introducing a $H^1$-stable localized orthogonal decomposition for constructing the test function space. In subsequent works, refined construction strategies were proposed \cite{HeP13,Henning14}.

The LOD framework relies on a decomposition of a high-dimensional solution space into a coarse space (spanned by a set of standard nodal basis functions) and a fine scale detail space that is expressed through the kernel of a projection operator. The generalized finite element basis functions are constructed by adding a \textit{correction} from the detail space to each coarse nodal basis function. The corrections are problem dependent and constructed by solving a partial differential equation in the fine scale part of the space. In \cite{Malqvist14} elliptic equations are considered and it is proven that the corrections decay exponentially for these problems. This motivates a truncation to patches of coarse elements, which allow for efficient computations. The resulting method is proved to be convergent of optimal order. This convergence result does not depend on any assumptions regarding periodicity or scale separation of the coefficients. Since its development, the method has been applied to several other types of equations, see, for instance, semilinear elliptic equations \cite{HMP14}, boundary value problems \cite{Henning14}, eigenvalue problems \cite{Malqvist13,HMP14b}, linear and semilinear parabolic equations \cite{MP15}, the Helmholtz problem \cite{Peterseim16,Gallistl16} and the linear wave equation \cite{Abdulle14}. A review is given in \cite{Peterseim15}.

In this work we consider linear elasticity equations with mixed inhomogeneous Dirichlet and Neumann boundary conditions. We construct corresponding correctors for standard nodal basis functions and prove that they decay exponentially. Moreover, we prove that the resulting generalized finite element method converges with optimal order in the spatial $H^1$-norm. The results are confirmed by a numerical example.

Furthermore, the generalized finite element method proposed in this paper reduces the locking effect that is observed for classical finite elements based on continuous piecewise affine polynomials for nearly incompressible materials. The error bound derived for the ideal method (without localization) is uniform in the Lam\'{e} parameter $\lambda$, i.e., completely locking-free. The error estimate for the final localized method depends on $\lambda$, however not in the usual manner, but only weakly through a term that converges with an exponential rate to zero. In practice, this eliminates the locking-effect.

The paper is organized as follows. In Section~\ref{sec:problem} we formulate the problem, in Section~\ref{sec:num} we define the generalized finite element method and in Section~\ref{sec:loc} we perform the localization of the basis functions. Finally, in Section~\ref{sec:experiments} we provide some numerical examples.

\section{Problem formulation}\label{sec:problem}
Let $d=2,3,$ denote the spatial dimension and let $\mathbb{S}:=\R^{d \times d}_{\text{sym}}$ denote the space of $d\times d$ symmetric matrices over $\R$. On $\mathbb{S}$, we use the double-dot product notation 
\begin{align*}
A:B=\sum_{i,j=1}^d A_{ij}B_{ij}, \quad A,B \in \mathbb{S}.
\end{align*}
The computational domain $\Omega\subseteq\R^d$ is assumed to be a bounded polygonal (or polyhedral) Lipschitz domain describing the reference configuration of an elastic medium. We use $(\cdot,\cdot)_{L_2(\Omega)}$ to denote the inner product on $L_2(\Omega,\R^d)$
\begin{align*}
(v,w)_{L_2(\Omega)}:=\int_{\Omega} v(x) \cdot w(x) \,\mathrm dx, \quad v,w \in L_2(\Omega,\R^d),
\end{align*}
and $\|\cdot\|_{L_2(\Omega)}$ for the corresponding norm. Furthermore, we let $H^1(\Omega,\R^d)$ denote the classical Sobolev space with norm $\|v\|^2_{H^1(\Omega)}:=\|v\|^2_{L_2(\Omega)}+\|\nabla v\|^2_{L_2(\Omega)}$, where $\nabla v \in L_2(\Omega,\R^{d\times d})$, and
\begin{align*} 
\| \nabla v \|^2_{L_2(\Omega)} := \sum_{i,j=1}^d \int_{\Omega} (\partial_i v_j(x))^2 \, \mathrm{d}x,
\quad v \in H^1(\Omega,\R^d).
\end{align*}

Let $u:\Omega \rightarrow \R^d$ denote the displacement field of the elastic medium. Under the assumption of small displacement gradients, the (linearized) strain tensor $\varepsilon(u)$ is given by 
\begin{align*}
\varepsilon_{kl}(u):=\frac{1}{2}(\partial_k u_l + \partial_l u_k), \quad  1\leq k,l \leq d.
\end{align*}
Furthermore, Hooke's (generalized) law states that the stress tensor $\sigma$ is given by the relation
\begin{align*}
\sigma_{ij} = \sum_{k,l=1}^d A_{ijkl}(x) \varepsilon_{kl}(u), \quad 1\leq i,j \leq d,
\end{align*}
where $A$ is a fourth order tensor describing the elastic medium. In this paper we assume that the material is strongly heterogeneous and thus $A$ has multiscale properties. The tensor $A$ is assumed to be symmetric in the sense that $A_{ijkl}=A_{jikl}=A_{ijlk}=A_{klij}$ almost everywhere. 

Cauchy's equilibrium equation now states that 
\begin{align*}
-\nabla \cdot \sigma = f,
\end{align*}
where $f: \Omega \rightarrow \R^d$ denotes the body forces. To formulate the problem of interest we let $\Gamma_D$ and $\Gamma_N$ denote two disjoint Hausdorff measurable segments of the boundary, such that $\Gamma_D \cup \Gamma_N = \partial \Omega$, where Dirichlet and Neumann conditions are imposed respectively. The linear elasticity problem consists of finding the displacement $u$ and the stress tensor $\sigma$ such that
\begin{alignat}{2}
-\nabla \cdot \sigma &= f,& \quad & \text{in } \Omega, \label{elas1}  \\
\sigma_{ij}&=\sum_{k,l=1}^d A_{ijkl} \hspace{2pt} \varepsilon_{kl}(u),& & \text{in } \Omega, \label{elas2}\\
u &= g,& & \text{on } \Gamma_D,\label{bd1}\\
\sigma \cdot  n &= b,& & \text{on } \Gamma_N,\label{bd2}
\end{alignat}
where we assume that $\meas(\Gamma_D)>0$. Here $g,b:\Omega\rightarrow \R^d$ denotes the Dirichlet and Neumann data respectively.

To pose a variational form of problem \eqref{elas1}-\eqref{bd2} we need to define appropriate test and trial spaces. Letting $\gamma:H^1(\Omega)\rightarrow L_2(\Gamma_D)$ denote the trace operator onto $\Gamma_D$, we define the test space
\begin{align*}
V := \{v \in (H^1(\Omega))^d: \gamma v = 0\}.
\end{align*}
Multiplying the equation \eqref{elas1} with a test function from $V$ and using Green's formula together with the boundary conditions \eqref{bd2} we get that
\begin{align*}
(\sigma:\nabla v)_{L_2(\Omega)}=(f,v)_{L_2(\Omega)} + (b,v)_{L_2(\Gamma_N)}.
\end{align*}
Due to the symmetry of $A$ we have the identity $(\sigma:\nabla v)=(\sigma:\varepsilon(v))$, and by defining the bilinear form
\begin{align*}
\B(u,v):=(\sigma:\varepsilon(v))_{L_2(\Omega)}=(A(x)\varepsilon(u):\varepsilon(v))_{L_2(\Omega)},
\end{align*}
we arrive at the following weak formulation of \eqref{elas1}-\eqref{bd2}. Find $u\in H^1(\Omega,\R^d)$, such that $\gamma u = g$, and
\begin{alignat}{2}
\B(u,v)= (f,v)_{L_2(\Omega)} + (b,v)_{L_2(\Gamma_N)},& \quad &\forall v \in V.\label{weak}
\end{alignat}

\begin{remark}\label{isotropic}
In the case of an isotropic medium the elasticity coefficient satisfies $A_{ijkl}=\mu(\delta_{ik}\delta_{jl}+\delta_{il}\delta_{jk}) + \lambda \delta_{ij}\delta_{kl}$, where $\delta_{ij}$ is the Kronecker delta, and $\mu$ and $\lambda$ are the so called Lam\'{e} coefficients. The stress tensor can in this case be simplified to
\begin{align*}
\sigma = 2\mu \varepsilon(u) + \lambda (\nabla \cdot u) I,
\end{align*}
where $I$ is the identity matrix.  
\end{remark} 

\begin{assump} We make the following assumptions on the data
	\begin{enumerate}[label=(A\arabic*)]
		\item \label{ass-coeff} $A_{ijkl} \in L_\infty(\Omega,\R)$, $1\leq i,j,k,l \leq d$, and there exist positive constants $\alpha,\beta \in \R$ such that
		\begin{align*} 
		\alpha B:B \leq A(\cdot)B:B \leq \beta B:B, \quad \forall B\in \mathbb{S}, \quad \text{a.e. in } \Omega. 
		\end{align*}
		\item \label{ass-data} $f\in L_2(\Omega,\R^d)$, $b\in L_2(\Gamma_N,\R^d)$, and $g\in H^{1/2}(\Gamma_D,\R^d)$.
	\end{enumerate}
\end{assump}

Recall Korn's inequality for a domain with mixed boundary conditions, see, for instance, \cite{BrennerScott, Mazzucato10}.
\begin{lemma}[Korn's inequality]\label{korn}
	Let $\Omega \subset \R^d$ denote a bounded and connected Lipschitz-domain, and let $\Gamma_D$ denote the part of the boundary where Dirichlet boundary conditions are defined. If $\meas(\Gamma_D) >0$, then
	\begin{align}
	\label{korn-global-estimate}\| \nabla v \|_{L_2(\Omega)} &\leq C_\mathrm{ko}\| \varepsilon(v) \|_{L_2(\Omega)}, \quad \forall v \in V,	
	\end{align}
	Here $C_\mathrm{ko}$ is a constant depending only on $\Omega$.
\end{lemma}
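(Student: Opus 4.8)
The plan is to establish the classical Korn inequality by the standard compactness argument, the hypothesis $\meas(\Gamma_D)>0$ entering only at the very last step through the structure of the kernel of $\varepsilon$. All estimates are carried out on the space $V$.

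First I would record the \emph{second Korn inequality}, the genuinely analytic ingredient: there is a constant $C$ depending only on $\Omega$ with
\begin{align*}
\|v\|_{H^1(\Omega)}^2 \leq C\left(\|v\|_{L_2(\Omega)}^2 + \|\varepsilon(v)\|_{L_2(\Omega)}^2\right), \quad \forall v \in H^1(\Omega,\R^d).
\end{align*}
I expect this to be the main obstacle. The cleanest route rests on the Lions--Ne\v{c}as lemma, which asserts that a distribution whose first-order derivatives lie in $H^{-1}(\Omega)$ is itself in $L_2(\Omega)$, with a quantitative bound. Applying that lemma to each component of $\nabla v$ via the pointwise identity $\partial_k \partial_l v_m = \partial_k \varepsilon_{lm}(v) + \partial_l \varepsilon_{km}(v) - \partial_m \varepsilon_{kl}(v)$ turns control of $\varepsilon(v)$ into control of all second derivatives of $v$, and hence of $\nabla v$, modulo the lower-order term $\|v\|_{L_2(\Omega)}$.

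Second, to absorb that lower-order term I would combine a Poincar\'e--Friedrichs inequality with a compactness argument. Since $\Omega$ is connected and $\meas(\Gamma_D)>0$, every $v \in V$ satisfies $\|v\|_{L_2(\Omega)} \leq C_P \|\nabla v\|_{L_2(\Omega)}$. Arguing by contradiction, if no uniform $C_\mathrm{ko}$ existed there would be a sequence $(v_n) \subset V$ with $\|\nabla v_n\|_{L_2(\Omega)}=1$ and $\|\varepsilon(v_n)\|_{L_2(\Omega)} \to 0$; the Poincar\'e bound makes $(v_n)$ bounded in $H^1(\Omega,\R^d)$, so by Rellich--Kondrachov a subsequence, still denoted $(v_n)$, converges in $L_2(\Omega)$. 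Inserting the differences $v_n-v_m$ into the second Korn inequality shows this subsequence is Cauchy in $H^1$, hence converges strongly to some $v$, which lies in $V$ because $\gamma$ is continuous and so $V$ is closed. By strong convergence $\|\nabla v\|_{L_2(\Omega)}=1$ while $\varepsilon(v)=0$.

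Finally I would identify the kernel of $\varepsilon$. Any field with $\varepsilon(v)=0$ is an infinitesimal rigid-body motion $v(x)=a+Bx$ with $a \in \R^d$ and $B$ skew-symmetric. Subtracting the relation $a+Bx_i=0$ at distinct points $x_i \in \Gamma_D$ gives $B(x_i-x_j)=0$; since $\meas(\Gamma_D)>0$ forces $\Gamma_D$ to contain $d$ affinely independent points, the $d-1$ difference vectors span a $(d-1)$-dimensional subspace annihilated by $B$, and as a nonzero skew-symmetric matrix has rank at least two this forces $B=0$ and then $a=0$. Thus $v\equiv 0$, contradicting $\|\nabla v\|_{L_2(\Omega)}=1$ and proving the estimate; the resulting constant depends only on $\Omega$ and on $\Gamma_D$, as claimed.
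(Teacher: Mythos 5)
The paper offers no proof of this lemma: it is recalled as a classical fact with a pointer to \cite{BrennerScott, Mazzucato10}, so there is no in-paper argument to compare yours against. Your proposal is the standard compactness proof and it is correct. The genuinely deep ingredient, the Lions--Ne\v{c}as lemma on Lipschitz domains, is used as a black box, which is legitimate and is exactly where the cited references also place the analytic weight; the identity $\partial_k\partial_l v_m=\partial_k\varepsilon_{lm}(v)+\partial_l\varepsilon_{km}(v)-\partial_m\varepsilon_{kl}(v)$ is right and yields the second Korn inequality, the Cauchy-in-$H^1$ step via that inequality applied to differences is the usual Peetre--Tartar mechanism, and the kernel argument is sound: a subset of $\partial\Omega$ of positive $(d-1)$-dimensional surface measure cannot lie in an affine subspace of dimension $d-2$, so it supplies $d$ affinely independent zeros of $a+Bx$, whence $\operatorname{rank}B\le 1$, and since a nonzero skew-symmetric matrix has even rank at least $2$ this forces $B=0$ and then $a=0$. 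Two minor remarks: the constant you obtain depends on $\Gamma_D$ as well as on $\Omega$, which you correctly acknowledge and which is the accurate statement (the lemma's "depending only on $\Omega$" is slightly loose); and, being a compactness argument, your proof is non-constructive and does not recover the explicit value $C_{\mathrm{ko}}=\sqrt{2}$ that the paper quotes for $\Gamma_D=\partial\Omega$, which instead follows from the elementary first Korn inequality by integration by parts on $H^1_0$ --- but that remark lies outside the lemma statement, so this is a limitation rather than a gap.
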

In the case $\Gamma_D=\partial \Omega$ we have $C_\mathrm{ko}=\sqrt{2}$, independently of the size of $\Omega$. Using \eqref{korn-global-estimate} we derive the following bounds,
\begin{align}\label{B-bounds}
\alpha C_\mathrm{ko}^{-2}\|\nabla v\|^2_{L_2(\Omega)}\leq \B(v,v) \leq \beta \|\nabla v\|^2_{L_2(\Omega)}, \quad \forall v\in V,
\end{align}
where we have used the bound $\|\varepsilon(v)\|_{L_2(\Omega)}\leq \|\nabla v\|_{L_2(\Omega)}$. It follows that the bilinear form $\B(\cdot,\cdot)$ is an inner product on $V$ and existence and uniqueness of a solution to the problem \eqref{weak} follows from the Lax-Milgram lemma. We denote the norm induced by the inner product $\B(\cdot,\cdot)$ by $\|v \|^2_{\B(\Omega)}:=\B(v,v)$ for $v\in V$.
\begin{remark}
	In the case of an isotropic material (see Remark~\ref{isotropic}) we have the bounds
	\begin{align*}
	C_\mathrm{ko}^{-2} 2\mu_1 \|\nabla v\|^2_{L_2(\Omega)}&\leq \|\sqrt{2\mu} \varepsilon(v)\|^2_{L_2(\Omega)} \leq \|\sqrt{2\mu} \varepsilon(v)\|^2_{L_2(\Omega)} + \|\sqrt{\lambda}\nabla \cdot v\|^2_{L_2(\Omega)} \\&=\B(v,v) \leq C(2\mu_2 + \lambda_2)\|\nabla v\|_{L_2(\Omega)}^2,
	\end{align*}
	where $\mu_1>0$ is the lower bound of $\mu$ and $\mu_2,\lambda_2 \leq \infty$ are the upper bounds of $\mu$ and $\lambda$ respectively. We emphasize that this means that only $\beta$ in \eqref{B-bounds} depends on $\lambda$.
\end{remark}

\section{Numerical Approximation}\label{sec:num}

\subsection{Classical finite element}
First, we define the classical finite element space of continuous and piecewise affine elements. Let $\mathcal{T}_h$ be a regular triangulation of $\Omega$ into closed triangles/tetrahedra with mesh size $h_T:= \diam(T)$, for $T\in\mathcal{T}_h$, and denote the largest diameter in the triangulation by $h:=\max_{T\in \mathcal{T}_h} h_T$. We assume that the family of triangulations \{$\mathcal{T}_h\}_{h>0}$ is shape regular. Now define the spaces
\begin{align*}
S_h&=\{v\in (C(\bar{\Omega}))^d: v|_T \text{ is component-wise a polynomial of degree} \leq 1, \forall T \in \mathcal{T}_h\},\\
V_h &= S_h\cap V.
\end{align*}
Furthermore, we let $\mathcal N_h$ denote the nodes generated by $\mathcal T_h$ and $\mathring{\mathcal N}_h=\mathcal N_h\setminus \Gamma_D$ the free nodes in $V_h$. Now, let $g_h \in S_h$ be an approximation of an extension of $g$, such that $g_h(z)=0$, $\forall z\in \mathring{\mathcal N}_h$ and $\gamma g_h$ is some appropriate approximation of $g$. 
The classical finite element method now reads; find $u_h=u_{h,0}+g_h$, such that $u_{h,0}\in V_h$ and
\begin{alignat}{2}
\B(u_{h,0},v)= (f,v)_{L_2(\Omega)} + (b,v)_{L_2(\Gamma_N)} - \B(g_h,v),& \quad &\forall v \in V_h.\label{fem}
\end{alignat} 
Note that $\gamma u_h = \gamma g_h$, where $\gamma g_h$ is an approximation of $g$.

\begin{theorem}\label{femconv}
	Let $u$ be the solution to \eqref{weak} and $u_h$ the solution to \eqref{fem}. If the solution $u$ is sufficiently regular we have 
	\begin{align*}
	\|u-u_h\|_{H^1(\Omega)}\leq C_{A}h\|D^2u\|_{L_2(\Omega)},
	\end{align*}
	where $C_A$ depends on the size of $A$ and $\|D^2u\|_{L_2(\Omega)}$ depends on the variations in $A$ via a regularity estimate $\|D^2u\|_{L_2(\Omega)} \le C(u,\Omega) \| A \|_{W^{1,\infty}(\Omega)}$. In particular, we have $\|D^2u\|_{L_2(\Omega)}\rightarrow \infty$ the faster $A$ oscillates. 
\end{theorem}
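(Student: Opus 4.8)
The plan is to run the classical Céa-type argument adapted to the mixed boundary conditions and then insert the standard nodal interpolation estimate for continuous piecewise affine elements. Denote by $I_h$ the Lagrange nodal interpolation operator onto $S_h$. Since $g_h$ is chosen to vanish at the free nodes $\mathring{\mathcal N}_h$ and to reproduce an approximation of $g$ at the Dirichlet nodes, I would assume (the natural choice) that $\gamma g_h$ coincides with the nodal interpolant of $g$ on $\Gamma_D$, so that $I_h u - u_h \in V_h$. As a first step I would establish Galerkin orthogonality: subtracting \eqref{fem} tested against $v \in V_h \subseteq V$ from \eqref{weak}, and using $u_h = u_{h,0}+g_h$ together with the definition of the right-hand side in \eqref{fem}, all load and boundary terms cancel and one obtains
\begin{align*}
\B(u-u_h,v)=0,\quad \forall v\in V_h.
\end{align*}

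Next, set $\phi:=I_h u - u_h\in V_h$. Using the coercivity bound in \eqref{B-bounds}, then the orthogonality above to discard the $u-u_h$ contribution, and finally the Cauchy--Schwarz inequality for the inner product $\B(\cdot,\cdot)$ together with the continuity bound in \eqref{B-bounds}, I would estimate
\begin{align*}
\alpha C_\mathrm{ko}^{-2}\|\nabla \phi\|_{L_2(\Omega)}^2 \le \B(\phi,\phi)=\B(I_h u-u,\phi)\le \beta\|\nabla(u-I_h u)\|_{L_2(\Omega)}\|\nabla \phi\|_{L_2(\Omega)}.
\end{align*}
This yields $\|\nabla\phi\|_{L_2(\Omega)}\le \beta C_\mathrm{ko}^2\alpha^{-1}\|\nabla(u-I_h u)\|_{L_2(\Omega)}$, and the triangle inequality gives $\|\nabla(u-u_h)\|_{L_2(\Omega)}\le (1+\beta C_\mathrm{ko}^2\alpha^{-1})\|\nabla(u-I_h u)\|_{L_2(\Omega)}$, so the energy error is controlled by the best approximation error with a constant depending only on the spectral bounds $\alpha,\beta$ of $A$ and on $C_\mathrm{ko}$; this is the constant $C_A$.

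To upgrade to the full $H^1$-norm I would split $\|u-u_h\|_{L_2(\Omega)}\le \|u-I_h u\|_{L_2(\Omega)}+\|\phi\|_{L_2(\Omega)}$ and apply the Poincar\'e--Friedrichs inequality $\|\phi\|_{L_2(\Omega)}\le C_P\|\nabla\phi\|_{L_2(\Omega)}$, valid since $\phi\in V$ vanishes on $\Gamma_D$ with $\meas(\Gamma_D)>0$. It then remains to insert the standard interpolation estimates $\|u-I_h u\|_{L_2(\Omega)}\le Ch^2\|D^2u\|_{L_2(\Omega)}$ and $\|\nabla(u-I_h u)\|_{L_2(\Omega)}\le Ch\|D^2u\|_{L_2(\Omega)}$, valid on shape-regular meshes whenever $u\in (H^2(\Omega))^d$; collecting terms gives the claimed bound.

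The main non-routine ingredient is the $H^2$-regularity hidden in the phrase \emph{sufficiently regular}: on a general polygonal Lipschitz domain with mixed boundary conditions full $H^2$-regularity need not hold, and even when it does the estimate $\|D^2u\|_{L_2(\Omega)}\le C(u,\Omega)\|A\|_{W^{1,\infty}(\Omega)}$ degenerates as $A$ oscillates, since differentiating the equation produces terms involving $\nabla A$. This is precisely the mechanism behind $\|D^2 u\|_{L_2(\Omega)}\sim\epsilon^{-1}$ for $\epsilon$-periodic coefficients and the motivation for the multiscale method, so I would treat this elliptic-regularity bound as a given input rather than re-derive it. A secondary technical point is the inhomogeneous Dirichlet data, handled above through the assumption that $g_h$ reproduces the nodal interpolant of $g$ so that $I_h u - u_h\in V_h$; a more careful treatment would carry an additional boundary-approximation term of the same order, which is absorbed into the right-hand side.
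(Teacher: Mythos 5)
The paper states Theorem~\ref{femconv} without proof, as it is the classical a priori estimate for conforming $P1$ finite elements; your argument (Galerkin orthogonality, C\'ea's lemma via the coercivity and continuity bounds in \eqref{B-bounds}, nodal interpolation estimates, and a Poincar\'e--Friedrichs bound for the $L_2$ part) is exactly the standard proof the authors implicitly invoke, and it is correct. Your explicit handling of the two genuine caveats --- that $\gamma g_h$ must be the nodal interpolant of $g$ for $I_h u - u_h \in V_h$, and that the $H^2$-regularity with $\|D^2 u\|_{L_2(\Omega)} \le C(u,\Omega)\|A\|_{W^{1,\infty}(\Omega)}$ is an assumed input hidden in ``sufficiently regular'' --- is consistent with how the paper treats them.
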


Since
the a priori bound in Theorem~\ref{femconv} depends, through the $H^2$-norm of $u$, on the variations (derivatives) in the data,
the mesh width $h$ must be sufficiently small for $u_h$ to be a good approximation of $u$. In the context of multiscale problems, this results in a significant computational complexity.
In the following we assume
that $h$ is small enough and we shall refer to $u_h$ as a reference solution. However, we emphasize that our method never requires to compute this expensive reference solution and that it is purely used for comparisons.

\subsubsection{Poisson locking}\label{locking}
	This subsection describes the phenomenon known as locking, sometimes referred to as Poisson locking to distinguish it from other types of locking. To simplify the discussion here we assume that we have an isotropic material with $\mu$ and $\lambda$ \textit{constant} parameters and $g_D=0$ on $\Gamma_D=\partial \Omega$. In this case we can exploit Galerkin orthogonality and the norm-equivalence in Remark \ref{isotropic} to see that the error bound in Theorem~\ref{femconv} becomes the estimate
	\begin{align}\label{error-locking}
	\|u-u_h\|_{H^1(\Omega)}\leq Ch\frac{\sqrt{2\mu+\lambda}}{\sqrt{2 \mu}}\|D^2u\|_{L_2(\Omega)},
	\end{align}
	where $C$ is independent of $\mu$ and $\lambda$. Moreover, $\|D^2u\|_{L_2(\Omega)}$ is independent of $\mu$ and $\lambda$ which follows from the stability estimate (see \cite{BrennerSung92}),
	\begin{align}
	\label{stability-estimate-brenner}\|u\|_{H^2(\Omega)} + \lambda\|\nabla\cdot u\|_{H^1(\Omega)} \leq C_\Omega\|f\|_{L_2(\Omega)},
	\end{align}
	where $C_\Omega$ is independent of $\mu$ and $\lambda$. We emphasize that the estimate \eqref{stability-estimate-brenner} does not hold if $\mu$ and $\lambda$ vary in space. Since both $C$ and $\|D^2u\|_{L_2(\Omega)}$ in \eqref{error-locking} are independent of $\lambda$, we conclude that the error bound blows up as $\lambda\rightarrow \infty$. This is counter-intuitive to the observation that the error with respect to the $H^1$-best-approximation in $V_h$ is not affected by $\lambda$.

In fact, there is a simple reason for this phenomenon. For $\lambda \rightarrow \infty$ we have that the displacement must fulfill the extra condition $\nabla \cdot u = 0$. However, $v_h=0$ is the only function in $V_h$ that fulfills $\nabla \cdot v_h=0$. This forces the Galerkin-approximation $u_h$ to convergence to the bad approximation $u_h=0$ in order to remain stable. This issue can be avoided by using discrete solution spaces in which divergence-free functions can be well-approximated, cf. the robust methods in \cite{BrennerScott,BrennerSung92, Babuska92,Arnold07}, where it is in fact possible to derive estimates of the type $\|u-u_h\|_{H^1(\Omega)}\leq Ch\|D^2u\|_{L_2(\Omega)}$ independent of $\lambda$.

	
From the discussion above we conclude that
if $\lambda$ is large compared to $\mu$ the mesh size must be sufficiently small, i.e. $h\lesssim 1/\sqrt{\lambda}$, to achieve convergence for conventional Lagrange $P1$ finite elements. A natural question is what the typical ranges of values for $\mu$ and $\lambda$ are and how they are related. The Lam\'e parameters are determined by Young's modulus $E$ and Poisson's ratio $\nu$ according to $\mu=\frac{E}{2(1+\nu)}$ and $\lambda=\frac{E \nu}{(1+\nu)(1-2\nu)}$. Consequently, we obtain $\frac{\sqrt{2\mu+\lambda}}{\sqrt{2 \mu}} = \sqrt{\frac{1}{1-2\nu}}$ and hence \eqref{error-locking} reduces to
\begin{align}\label{error-locking-2}
\|u-u_h\|_{H^1(\Omega)}\leq C_\Omega \frac{h}{\sqrt{1-2\nu}}\|f\|_{L_2(\Omega)},
\end{align} 
where we see that the problem only arises if the Poisson's ratio is close to $\nu=0.5$, which describes a perfectly incompressible material. In most engineering applications the value of Poisson's ratio lies between $0.2$ and $0.35$ (e.g. $\nu = 0.27-0.30$ for steel, $\nu = 0.2-0.3$ for rocks such as granite or sandstone and $\nu = 0.17-0.27$ for glass; cf. \cite{GeG09}). Poisson's ratios larger than $0.45$ are rare. Examples for such tough cases are clay ($\nu \le 0.45$), gold ($\nu=0.45$) and lead ($\nu=0.46$). Natural rubber with $\nu = 0.4999$ can be considered as the most extreme case (cf. \cite{MoR09}). These values give us a clear image about the order of magnitude required for $h$ in practical scenarios. If the extension of $\Omega$ is of order $1$, tough cases ($\nu\approx 0.45$) require $h\lesssim \frac{1}{3}$ and extreme cases ($\nu\approx 0.4999$) require $h\lesssim \frac{1}{70}$. These values help us to understand the phenomenon of locking better. The constraints that are imposed by Poisson locking are not severe (in the sense that it does typically not make the problem prohibitively expensive), but they are highly impractical and not desirable in the sense that they make the problem significantly more expansive than it should be. For instance for $\nu = 0.45$ the mesh needs to be three times finer than for a locking-free method, which makes an enormous difference in CPU demands due to the curse of dimension.

\subsubsection{Poisson locking for multiscale problems}\label{locking-multiscale}

This paper is devoted to multiscale problems and the locking effect has to be seen from a different perspective in this case. Multiscale elasticity problems as they typically arise in engineering or in geosciences involve material parameters (in general form represented by the tensor $A(x)$) that vary on an extremely fine scale $\epsilon$ (relative to the extension of the computational domain) with $\epsilon \ll \lambda^{-1/2}$. These variations need to be resolved by an underlying fine mesh which imposes the condition $h < \epsilon \ll \lambda^{-1/2}$ even for locking-free methods. In other words, the natural constraints imposed by the variations of the coefficient are much more severe than the constraints imposed by the locking effect. Since we assume that the reference solution $u_h$ given by \eqref{fem} is a good approximation to our original multiscale problem (i.e. $h<\epsilon$), then the solution will not suffer from the locking effect either. For that reason we consider $u_h$ as being locking-free. Our multiscale method is constructed to approximate $u_h$ on significantly coarser scales of order $H$, and we call this method a {\it locking-free multiscale method} if the convergence rates in $H$ are independent of $\lambda$ and the variations of $A$.

Locking and multiscale are two different characteristics that typically need to be treated with different approaches, as a multiscale method is not necessarily locking-free. In the following we show that the framework of the LOD can be used for stabilizing $P1$ Lagrange finite elements in such a way that both effects are reduced simultaneously. In particular we show that it is not necessary to use higher order Lagrange elements, discontinuous Galerkin approaches, mixed finite elements or Crouzeix-Raviart finite elements as they are commonly required for eliminating Poisson locking.
	
In this paper the error estimate for the ideal method (without localization) in Lemma~\ref{gfemconv} is independent of $\lambda$ and thus locking-free. The localization depends on the contrast $\beta/\alpha$, see Theorem~\ref{locgfemconv}. However, this ratio enters only through a term that converges with exponential order to zero. Consequently, the locking effect decays exponentially in the localized method. This is also tested numerically in Section~\ref{sec:experiments}.

\subsection{Generalized finite element}
In this subsection we introduce a generalized finite element method. Let $V_H$ denote the same classical finite element space as $V_h$, but with a coarser mesh size $H>h$. Let $\mathcal{T}_H$ be the triangulation associated with the space $V_H$ and assume that $\mathcal{T}_h$ is a refinement of $\mathcal{T}_H$ such that $V_H \subseteq V_h$. In addition to shape regular, we assume the family $\{\mathcal T_H\}_{H>h}$ to be quasi-uniform. 

We define $\mathcal{N}_H$ and $\mathring{\mathcal{N}}_H$ analogously to $\mathcal{N}_h$ and $\mathring{\mathcal{N}}_h$. Note that the mesh width $H$ is too coarse for the classical finite element solution \eqref{fem} in $V_H$ to be a good approximation. The aim is now to define a new (multiscale) space with the same dimension as $V_H$, but with better approximation properties. 

To define such a multiscale space we need to introduce some notation. First, let $I_H: V_h\rightarrow V_H$ denote an interpolation operator with the property that $I_H \circ I_H = I_H$ and
\begin{align}\label{interpolation-local}
H^{-1}_T\|v-I_Hv\|_{L_2(T)} + \|\nabla I_Hv\|_{L_2(T)} \leq C_I\|\nabla v\|_{L_2(\omega_T)}, \quad \forall T\in \mathcal{T}_H, \ v \in V_h,
\end{align}
where
\begin{align*}
\omega_T:=\cup\{\hat T \in \mathcal T_H: \hat T \cap T\neq \emptyset\}.
\end{align*}
For a shape regular mesh, the estimates in \eqref{interpolation-local} can be summed to a global estimate
\begin{align}\label{interpolation-global}
H^{-1}\|v-I_Hv\|_{L_2(\Omega)}+ \|\nabla I_Hv\|_{L_2(\Omega)} \leq C_\rho\|\nabla v\|_{L_2(\Omega)},
\end{align}
where $C_\rho$ depends on $C_I$ and the shape regularity parameter,  $\rho>0$;
\begin{align*}
\rho:=\max_{T \in \mathcal{T}_H} \rho_T, \ \text{with} \ \rho_T:= \frac{\diam B_T}{\diam T}, \ \text{for}\ T\in \mathcal{T}_H.
\end{align*}
Here $B_T$ is the largest ball contained in $T$. For instance, we could choose $I^i_H=E^i_H\circ \Pi^i_H$, $1\leq i\leq d$, where $\Pi^i_H$ is the $L_2$-projection onto $P_1(\mathcal{T}_H)$, the space of functions that are affine on each triangle $T\in \mathcal{T}_H$ and $E^i_H:P_1(\mathcal{T}_H)\rightarrow V_H$ the averaging operator defined by 
\begin{align}\label{interpolation}
(E^i_H(v))(z) = \frac{1}{\card\{T\in \mathcal{T}_H: z \in T\}}\sum_{T\in \mathcal{T}_H: z \in T} v|_T(z),
\end{align} 
where $z\in \mathring{\mathcal N}_H$, see \cite{Peterseim15} for further details and other possible choices of $I_H$. 

Let $V_\f$ denote the kernel to the operator $I_H$
\begin{align*}
V_\f:=\ker I_H=\{v\in V_h: I_Hv=0\}.
\end{align*}
The space $V_h$ can now be split into the two spaces $V_h=V_H\oplus V_\f$, meaning that $v_h\in V_h$ can be decomposed into $v_h=v_H+v_\f$, such that $v_H\in V_H$ and $v_\f \in V_\f$. The kernel $V_\f$ is a detail space in the sense that it captures all features that are not captured by the (coarse) space $V_H$.

Let $R_\f: V_h \rightarrow V_\f$ be the Ritz projection onto $V_\f$ using the inner product $\B(\cdot,\cdot)$ such that
\begin{align}\label{Ritz-Vf}
\B(R_\f v , w) = \B(v,w), \quad \forall w \in V_\f, \quad v\in V_h.
\end{align}
Since $v_h=v_H+v_\f$ with $v_H\in V_H$ and $v_\f \in V_\f$ we have
\begin{align*}
v_h -R_\f v_h = v_H - R_\f v_H, \quad \forall v_h\in V_h,
\end{align*}
and we define the multiscale space
\begin{align}\label{msbasis}
V_\ms = \{v_H-R_\f v_H : v_H \in V_H\}.
\end{align}
Note that this space has the same dimension as $V_H$, but contains fine scale features. Indeed, with $\lambda_z$ denoting the hat basis function in $V_H$ corresponding to node $z$, the set 
\begin{align*}
\{\lambda_z-R_\f \lambda_z : z \in \mathring{\mathcal N}_H\},
\end{align*}  
is a basis for $V_\ms$. Moreover, we note that $V_\ms$ is the orthogonal complement to $V_\f$ with respect to the inner product $\B(\cdot,\cdot)$. Thus the split $V_h=V_\ms\oplus V_\f$ and the following orthogonality holds for $v_\ms \in V_\ms$ and $v_\f \in V_\f$
\begin{align}\label{B-orthog}
\B(v_\ms,v_\f)=\B(v_\f,v_\ms)=0.
\end{align} 

To define a generalized finite element method we aim to replace the space $V_h$ with $V_\ms$ in \eqref{fem}. Due to the inhomogeneous boundary conditions we also need two extra corrections similar to the ones used in \cite{Henning14}. For the Dirichlet condition we subtract $R_\f g_h$ from the solution. For the Neumann condition we define a correction $\tilde b_\f  \in V_\f$ such that
\begin{align}\label{neumann-corr}
\B(\tilde b_\f , w) = (b,w)_{L_2(\Gamma_N)}, \quad \forall w \in V_\f. 
\end{align}

We are now ready to define the generalized finite element method; find 
\begin{align*}
u_\ms=u_{0,\ms} + \tilde b_\f  +  g_h - R_\f g_h,
\end{align*} 
such that $u_{0,\ms} \in V_\ms$ and
\begin{align}\label{gfem}
\B(u_{0,\ms},v)=(f,v)_{L_2(\Omega)} + (b,v)_{L_2(\Gamma_N)}- \B(\tilde b_\f  + g_h - R_\f g_h,v), \quad \forall v \in V_\ms.
\end{align}
Note that both $\tilde b_\f =R_\f g_h=0$ on $\Gamma_D$, so $\gamma u_\ms = \gamma g_h$, and 
\begin{align*}
\B(u_\ms,v) = (f,v)_{L_2(\Omega)} + (b,v)_{L_2(\Gamma_N)}, \quad \forall v\in V_\ms,
\end{align*}
as desired.

\begin{lemma}\label{gfemconv}
	Let $u_h$ be the solution to \eqref{fem} and $u_\ms$ the solution to \eqref{gfem}. Then
	\begin{align}\label{gfemconv-est}
	\|u_h-u_\ms\|_{H^1(\Omega)} \leq CH\alpha^{-1}\|f\|_{L_2(\Omega)},
	\end{align}
	where $C$ depends on $C_\mathrm{ko}$ and $C_\rho$.
\end{lemma}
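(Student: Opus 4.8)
The plan is to exploit the fact that both $u_h$ and $u_\ms$ are Galerkin solutions against overlapping test spaces, with $V_\ms \subseteq V_h$, so that their difference is governed by a clean orthogonality relation. I set $e := u_h - u_\ms$ and first observe that the inhomogeneous boundary contribution $g_h$ appears in both solutions and cancels: writing $u_h = u_{h,0} + g_h$ and $u_\ms = u_{0,\ms} + \tilde b_\f + g_h - R_\f g_h$ gives $e = u_{h,0} - u_{0,\ms} - \tilde b_\f + R_\f g_h$, and since $u_{0,\ms} \in V_\ms \subseteq V_h$ and $\tilde b_\f, R_\f g_h \in V_\f \subseteq V_h$, each summand lies in $V_h$; hence $e \in V_h$. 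Because $V_\ms \subseteq V_h$ and both \eqref{fem} and \eqref{gfem} produce the same right-hand side $(f,\cdot)_{L_2(\Omega)} + (b,\cdot)_{L_2(\Gamma_N)}$ when tested against $v \in V_\ms$, I obtain the Galerkin orthogonality $\B(e,v) = 0$ for all $v \in V_\ms$. Decomposing $e = e_\ms + e_\f$ in the $\B$-orthogonal splitting $V_h = V_\ms \oplus V_\f$ underlying \eqref{B-orthog} and testing this orthogonality against $v = e_\ms$ forces $\B(e_\ms,e_\ms)=0$, hence $e_\ms = 0$ and in fact $e \in V_\f$. This structural reduction is the heart of the argument.

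With $e \in V_\f$ secured, I would next derive the error equation $\B(e,e) = (f,e)_{L_2(\Omega)}$. Testing \eqref{fem} with $e \in V_h$ gives $\B(u_h,e) = (f,e)_{L_2(\Omega)} + (b,e)_{L_2(\Gamma_N)}$. For $\B(u_\ms,e)$, the $V_\ms$-component $u_{0,\ms}$ drops out by $\B$-orthogonality to $V_\f$; the definition \eqref{neumann-corr} of the Neumann corrector gives $\B(\tilde b_\f, e) = (b,e)_{L_2(\Gamma_N)}$; and the Ritz property \eqref{Ritz-Vf} yields $\B(g_h - R_\f g_h, e) = 0$ since $e \in V_\f$. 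Thus $\B(u_\ms,e) = (b,e)_{L_2(\Gamma_N)}$, and subtracting leaves $\B(e,e) = (f,e)_{L_2(\Omega)}$. The only delicate point is the bookkeeping of the three correction terms, and this is where I expect the bulk of the effort to lie.

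Finally, I would close the estimate using that $I_H e = 0$ for $e \in V_\f$. The interpolation bound \eqref{interpolation-global} then gives the Poincar\'e-type gain $\|e\|_{L_2(\Omega)} = \|e - I_H e\|_{L_2(\Omega)} \leq C_\rho H \|\nabla e\|_{L_2(\Omega)}$. Combining the error equation with the Cauchy--Schwarz inequality and the coercivity lower bound from \eqref{B-bounds},
\[
\alpha C_\mathrm{ko}^{-2}\|\nabla e\|_{L_2(\Omega)}^2 \leq \B(e,e) = (f,e)_{L_2(\Omega)} \leq \|f\|_{L_2(\Omega)}\|e\|_{L_2(\Omega)} \leq C_\rho H \|f\|_{L_2(\Omega)}\|\nabla e\|_{L_2(\Omega)},
\]
so that $\|\nabla e\|_{L_2(\Omega)} \leq C_\mathrm{ko}^2 C_\rho H \alpha^{-1}\|f\|_{L_2(\Omega)}$. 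Feeding this back into the Poincar\'e estimate controls $\|e\|_{L_2(\Omega)}$ by an extra factor of $H$, and assembling the two contributions yields $\|e\|_{H^1(\Omega)} \leq C H \alpha^{-1}\|f\|_{L_2(\Omega)}$ with $C$ depending only on $C_\mathrm{ko}$ and $C_\rho$. I note in passing that $\alpha$ is the $\lambda$-independent lower bound in \eqref{B-bounds}, so that the resulting constant involves no dependence on the Lam\'e parameter $\lambda$; this is precisely what makes the ideal method locking-free.
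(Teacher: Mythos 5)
Your proof is correct and follows essentially the same route as the paper's: Galerkin orthogonality for $V_\ms\subseteq V_h$, the $\B$-orthogonal splitting $V_h=V_\ms\oplus V_\f$ to reduce to the fine-scale component (the paper keeps $R_\f e$ explicit where you first show $e=R_\f e\in V_\f$, which is the same observation), cancellation of the boundary and Neumann correctors via \eqref{Ritz-Vf} and \eqref{neumann-corr}, and the interpolation estimate \eqref{interpolation-global} applied to $e-I_He$ to gain the factor $H$. No substantive differences.
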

\begin{proof}
	Define $e:=u_h-u_\ms$. Since $V_\ms \subseteq V_h$, we have the Galerkin orthogonality
	\begin{align*}
	\B(e,v) = 0,\quad \forall v \in V_\ms.
	\end{align*}
	Recall that we can write $e=(I-R_\f)e + R_\f e$ where $(I-R_\f)e \in V_\ms$ and $R_\f e\in V_\f$. Using this we get
	\begin{align*}
	\alpha C_\mathrm{ko}^{-2} \|\nabla e\|^2_{L_2(\Omega)} &\leq \B(e,e) = \B(e,R_\f e) = \B(u_h-u_\ms,R_\f e) \\&= (f,R_\f e)_{L_2(\Omega)} + (b,R_\f e)_{L_2(\Gamma_N)} - \B(u_{0,\ms} + \tilde b_\f  + g_h- R_\f g_h,R_\f e)\\
	&= (f,R_\f e)_{L_2(\Omega)},
	\end{align*}
	where have used the orthogonality \eqref{B-orthog} and the definitions \eqref{neumann-corr} and \eqref{Ritz-Vf} in the last equality. Now, since $R_\f e\in V_\f$ we have that $I_H R_\f e = 0$ and using \eqref{interpolation-global} we get
	\begin{align}\label{error-ideal}
	\alpha C_\mathrm{ko}^{-2} \|\nabla e&\|^2_{L_2(\Omega)} \leq \B(e,e) \leq (f,R_\f e-I_H R_\f e)_{L_2(\Omega)}\\& \leq \|f\|_{L_2(\Omega)}\|R_\f e -I_H R_\f e\|_{L_2(\Omega)} \leq C_\rho H\|f\|_{L_2(\Omega)}\|\nabla e\|_{L_2(\Omega)},\notag
	\end{align}
	and \eqref{gfemconv-est} follows.
\end{proof}

\section{Localization}\label{sec:loc}
The problem of finding $R_\f \lambda _z$ in \eqref{msbasis} is posed in the entire fine scale space $V_\f$ and thus computationally expensive. Moreover, the resulting basis functions may have global support. However, as we show in this section, the basis functions have exponential decay away from node $z$, which motivates a truncation of the basis functions. This truncation significantly reduces the computational cost and the resulting functions have local support.

We consider a localization strategy similar to the one proposed in \cite{Henning14}. We restrict the fine scale space $V_\f$ to patches $\omega_k(T)$ of coarse elements of the following type; for $T\in \mathcal T_H$ 
\begin{align*}
\omega_{0}(T)&:=\text{int } T,\\ \omega_{k}(T)&:=\text{int } \big(\cup\{\hat T \in \mathcal T_H: \hat T \cap \overline{\omega_{k-1}(T)}\neq \emptyset\}\big), \quad k=1,2,...
\end{align*}
Define $V_{\f }(\omega_k(T)):=\{v\in V_\f :v=0 \text{ on }(\overline\Omega\setminus\Gamma_N)\setminus \omega_k(K)\}$ to be the restriction of $V_\f $ to the patch $\omega_k(T)$. Note that the functions in $V_\f(\omega_k(T))$ are zero on the boundary $\partial \omega_k(T)\setminus \Gamma_N$.

We proceed by noting that the Ritz projection $R_\f$ in \eqref{Ritz-Vf} can be written as the sum
\begin{align*}
R_\f = \sum_{T\in \mathcal T_H} R_\f^T,
\end{align*}
where $R_\f^T:V_h \rightarrow V_\f$ and fulfills
\begin{align}\label{Ritz-Vf-T}
\B(R_\f^Tv,w)=\B(v,w)_T,\quad \forall w \in V_\f, \quad v \in V_h,\quad T \in \mathcal{T}_H,
\end{align}
where we define
\begin{align*}
\B(v,w)_T:=(A \varepsilon(v):\varepsilon(w))_{L_2(T)}, \quad T \in \mathcal{T}_H.
\end{align*}
We now aim to localize these computations by replacing $V_\f$ with $V_\f(\omega_k(T))$. Define $R_{\f,k}^T:V_h\rightarrow V_{\f}(\omega_k(T))$ such that 
\begin{align}\label{Ritz-Vfk}
\B(R_{\f,k}^Tv,w)=\B(v,w)_T,\quad \forall w \in V_\f(\omega_k(T)), \quad v \in V_h,\quad T \in \mathcal{T}_H,
\end{align}
and set $R_{\f,k}:=\sum_{T\in\mathcal T_H}R_{\f,k}^T$. We can now define the localized multiscale space
\begin{align}\label{mskbasis}
V_{\ms,k} = \{v_H-R_{\f,k} v_H : v_H \in V_H\}.
\end{align} 

Using the same techniques we also define localized versions of the Neumann boundary correctors \eqref{neumann-corr}. Note that $\tilde b_\f  = \sum _{T \in \mathcal T_H\cap \Gamma_N} \tilde b_\f ^T$ where $\tilde b_\f ^T$ is defined by
\begin{align*}
\B(\tilde b_\f ^T, w) = (b,w)_{L_2(\Gamma_N\cap T)}, \quad \forall w \in V_\f, \quad T\in \mathcal T_H,\quad  T \cap \Gamma_N\neq \emptyset,
\end{align*}
Thus, we define $\tilde b_{\f,k}^T \in V_\f(\omega_k(T))$ such that
\begin{align*}
\B(\tilde b_{\f,k}^T, w) = (b,w)_{L_2(\Gamma_N\cap T)}, \quad \forall w \in V_\f(\omega_k(T)), \quad T\in \mathcal T_H,\quad  T \cap \Gamma_N\neq \emptyset,
\end{align*}
and set $\tilde b_{\f,k} = \sum_{T\in\mathcal T_H} \tilde b_{\f,k}^T$.

We are now ready to define a localized version of \eqref{gfem}; find 
\begin{align*}
u_{\ms,k}=u_{0,\ms,k} + \tilde b_{\f,k} +  g_h - R_{\f,k} g_h,
\end{align*} 
such that $u_{0,\ms,k} \in V_{\ms,k}$ and
\begin{align}\label{gfem-loc}
\B(& u_{0,\ms,k},v)\\&=(f,v)_{L_2(\Omega)} + (b,v)_{L_2(\Gamma_N)}- \B(\tilde b_{\f,k} + g_h - R_{\f,k}g_h,v), \quad \forall v \in V_{\ms,k}.\notag
\end{align}
As for the non-localized problem \eqref{gfem}, we note that $\tilde b_{\f,k}$ and $R_{\f,k}$ vanish on $\Gamma_D$, so $\gamma u_{\ms,k}=\gamma g_h$, and
\begin{align*}
\B(u_{\ms,k},v) = (f,v)_{L_2(\Omega)} + (b,v)_{L_2(\Gamma_N)}, \quad \forall v \in V_{\ms,k}.
\end{align*}

The main result in this paper is the following theorem. 
\begin{theorem}\label{locgfemconv}
	Let $u_h$ be the solution to \eqref{fem} and $u_{\ms,k}$ the solution to \eqref{gfem}. Then there exists $\theta \in (0,1)$, depending on the contrast $\beta/\alpha$, such that
	\begin{align}\label{locgfemconv-est}
	\|u_h-u_{\ms,k}\|_{H^1(\Omega)} \leq  &CH\alpha^{-1}\|f\|_{L_2(\Omega)} \\&+ Ck^{d/2}\theta^k\sqrt{\frac{\beta^3}{\alpha^5}}(\|f\|_{L_2(\Omega)} + \|b\|_{L_2(\Gamma_N)}+\sqrt{\alpha}\|g_h\|_{\B(\Omega)}),\notag
	\end{align}
	where $C$ and $\theta$ depends on $C_\mathrm{ko}$, $\rho$, and $C_I$, but not on $k$, $h$, $H$, or the variations of $A$.
\end{theorem}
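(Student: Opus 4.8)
The plan is to compare against the ideal (non-localized) multiscale solution $u_\ms$ defined through \eqref{gfem} and to split
\[
\|u_h-u_{\ms,k}\|_{H^1(\Omega)} \leq \|u_h-u_\ms\|_{H^1(\Omega)} + \|u_\ms-u_{\ms,k}\|_{H^1(\Omega)}.
\]
The first term is precisely the ideal error already controlled in Lemma~\ref{gfemconv}, which accounts for the $CH\alpha^{-1}\|f\|_{L_2(\Omega)}$ contribution. All remaining effort goes into the localization error $\|u_\ms-u_{\ms,k}\|_{H^1(\Omega)}$, which has to be shown to decay like $k^{d/2}\theta^k$; by the two-sided bound \eqref{B-bounds} it is enough to estimate this difference in the energy norm $\|\cdot\|_{\B(\Omega)}$ and convert at the end.

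The central observation is that, for each $T\in\mathcal T_H$, the localized corrector $R_{\f,k}^T v$ is the $\B$-orthogonal projection of the ideal corrector $R_\f^T v$ onto $V_\f(\omega_k(T))$: subtracting \eqref{Ritz-Vfk} from \eqref{Ritz-Vf-T} gives $\B(R_\f^T v - R_{\f,k}^T v, w)=0$ for every $w\in V_\f(\omega_k(T))\subseteq V_\f$. Hence $\|R_\f^T v-R_{\f,k}^T v\|_{\B(\Omega)} = \inf_{w\in V_\f(\omega_k(T))}\|R_\f^T v - w\|_{\B(\Omega)}$, and it suffices to exhibit a single good approximant of $R_\f^T v$ supported in the patch $\omega_k(T)$. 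I would build that approximant from a cutoff: writing $\phi:=R_\f^T v$, I take $\eta$ with $0\le\eta\le1$, $\|\nabla\eta\|_{L_\infty}\lesssim H^{-1}$, equal to $1$ on $\omega_{k-1}(T)$ and vanishing outside $\omega_k(T)$, and set $w:=\eta\phi - I_H(\eta\phi)$. The idempotency $I_H\circ I_H=I_H$ gives $w\in V_\f$, and placing the transition one coarse layer inside ensures $w\in V_\f(\omega_k(T))$. The approximation error $\|\phi-w\|_{\B(\Omega)}$ is then controlled by the energy of $\phi$ in the exterior annulus.

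This reduces everything to the decisive estimate, which is the main obstacle: the tail energy $\|\phi\|_{\B(\Omega\setminus\omega_k(T))}$ decays geometrically in $k$. I would prove it by testing the corrector equation \eqref{Ritz-Vf-T} with a cutoff-based function supported away from $T$, so that the right-hand side $\B(v,\cdot)_T$ drops out, using the local interpolation bound \eqref{interpolation-local} to absorb the commutator terms generated by the non-locality of $I_H$ (whose support spreads one layer through the neighborhoods $\omega_{\hat T}$), and invoking \eqref{B-bounds} together with Korn's inequality (Lemma~\ref{korn}) to convert local energies. This produces a recursion of the form (tail energy on $\Omega\setminus\omega_k(T)$) $\le \gamma\cdot$(tail energy on $\Omega\setminus\omega_{k-1}(T)$). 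The delicate points are to show $\gamma\in(0,1)$ and that it depends only on $\beta/\alpha$, $C_I$, $\rho$ and $C_\mathrm{ko}$; iterating over the $k$ layers then yields the factor $\theta^k$ with $\theta=\gamma^{1/2}$ depending on the contrast, as claimed.

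Finally I would assemble the global estimate. Summing the element-wise bound $\|(R_\f^T-R_{\f,k}^T)v\|_{\B(\Omega)}\lesssim\theta^k\|v\|_{\B(T)}$ over $T$ and using that each point of $\Omega$ lies in only $O(k^d)$ of the patches $\omega_k(T)$ (finite overlap), combined with Cauchy--Schwarz, produces the $k^{d/2}$ prefactor, giving $\|(R_\f-R_{\f,k})v\|_{\B(\Omega)}\lesssim k^{d/2}\theta^k\|v\|_{\B(\Omega)}$; the identical argument applies to the Neumann corrector difference $\tilde b_\f-\tilde b_{\f,k}$ and the Dirichlet corrector difference $(R_\f-R_{\f,k})g_h$. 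I then decompose
\[
u_\ms-u_{\ms,k}=(u_{0,\ms}-u_{0,\ms,k})+(\tilde b_\f-\tilde b_{\f,k})-(R_\f-R_{\f,k})g_h,
\]
bound the Galerkin part $u_{0,\ms}-u_{0,\ms,k}$ through the same corrector decay (the two discrete problems \eqref{gfem} and \eqref{gfem-loc} differ only through the localized correctors on their right-hand sides), and bound the three data-dependent pieces by $\|f\|_{L_2(\Omega)}$, $\|b\|_{L_2(\Gamma_N)}$ and $\sqrt{\alpha}\|g_h\|_{\B(\Omega)}$ respectively. Converting from $\|\cdot\|_{\B(\Omega)}$ back to $\|\cdot\|_{H^1(\Omega)}$ via \eqref{B-bounds} and Korn's inequality, and carefully tracking the powers of $\alpha$ and $\beta$ generated by these repeated norm conversions, yields the stated contrast factor $\sqrt{\beta^3/\alpha^5}$ in front of the exponentially decaying term.
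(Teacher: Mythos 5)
Your proposal follows essentially the same route as the paper: the same three technical ingredients (exponential decay of the element correctors $R_\f^T v$ proved by testing \eqref{Ritz-Vf-T} with a cutoff-based function supported away from $T$, the best-approximation property of $R_{\f,k}^T$ in $V_\f(\omega_k(T))$ combined with a cutoff approximant, and the finite-overlap/Cauchy--Schwarz argument producing the $k^{d/2}$ factor) are exactly the content of Lemmas~\ref{bound-annulus}, \ref{exp-decay-lemma} and \ref{Ritzconv-loc}, and the final assembly and norm conversions match.

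The one place where your version is weaker than the paper's is the top-level split. You use the triangle inequality through $u_\ms$ and then must estimate $u_{0,\ms}-u_{0,\ms,k}$, which you dismiss on the grounds that ``the two discrete problems differ only through the localized correctors on their right-hand sides.'' That is not accurate: \eqref{gfem} and \eqref{gfem-loc} are posed over \emph{different, non-nested} spaces $V_\ms$ and $V_{\ms,k}$, so comparing their solutions directly requires a Strang-type perturbation argument, not just a right-hand-side comparison. The paper sidesteps this entirely: since $V_{\ms,k}\subseteq V_h$, Galerkin orthogonality for $u_h-u_{\ms,k}$ gives a C\'ea bound over $V_{\ms,k}$, and choosing the competitor $v=v_H-R_{\f,k}v_H$ with $v_H$ taken from the decomposition $u_{0,\ms}=v_H-R_\f v_H$ of the \emph{ideal} solution reduces everything to the ideal error plus the three corrector differences $\|(R_\f-R_{\f,k})v_H\|_{\B(\Omega)}$, $\|\tilde b_\f-\tilde b_{\f,k}\|_{\B(\Omega)}$ and $\|(R_\f-R_{\f,k})g_h\|_{\B(\Omega)}$, each controlled by Lemma~\ref{Ritzconv-loc}. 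You should adopt this C\'ea argument (or spell out the Strang lemma you are implicitly invoking); note also that the resulting bound involves $\|\nabla v_H\|_{L_2(\Omega)}$, which still has to be traced back to the data via the stability of $I_H$ and of \eqref{gfem} -- a step your outline omits but which is needed to arrive at the stated right-hand side.
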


To prove the a priori bound in Theorem~\ref{locgfemconv} we first prove three lemmas. In the proofs we use the cut-off functions $\eta^T_{k} \in V_H$ with nodal values
\begin{subequations}\label{cutoff}
	\begin{align}
	\eta^T_{k}(x) &= 0, \quad \forall x \in \mathcal N \cap \overline{\omega_{k-1}(T)}, \\
	\eta^T_{k}(x) &= 1, \quad \forall x \in \mathcal N\cap (\Omega\setminus \omega_k(T)).
	\end{align}
\end{subequations}
These functions satisfy the following Lipschitz bound
\begin{align}\label{cutoff-bound}
\|\nabla \eta^T_{k}\|_{L_\infty(\Omega)} \leq CH^{-1}, \quad T \in \mathcal T_H,
\end{align}
where $C$ now depends on the quasi-uniformity. The proof technique relies on the multiplication of a function in the fine scale space $V_\f$ with a cut-off function. However, this product does not generally belong to the space $V_\f$. To fix this, let $\mathcal I_h:V\rightarrow V_h$ denote the classical linear Lagrange interpolation onto $V_h$. Using that $I_H$ in \eqref{interpolation} is a projection we get
\begin{align*}
z:=(I-I_H)\mathcal I_h (\eta^T_{k} w) \in V_\f(\Omega\setminus\omega_{k-2}(T)), \quad \forall w \in V_\f,
\end{align*}
where $I$ denotes the identity mapping. Note that the Lagrange interpolation is needed since $\eta^T_{k}w \not \in V_h$. Furthermore, we have $\supp \mathcal I_h(\eta^T_{k}w) \subseteq \Omega\setminus\omega_{k-1}(T)$ and $\supp I_H\mathcal I_h(\eta^T_{k}R^T_\f v) \subseteq \Omega\setminus\omega_{k-2}(T)$ and we conclude $z \in V_\f(\Omega\setminus\omega_{k-2}(T))$. 

\begin{lemma}\label{bound-annulus}
	For $w\in V_\f$ and $z:=(I-I_H)\mathcal I_h \eta^T_{k} w \in V_\f(\Omega\setminus\omega_{k-2}(T))$ it holds that $\supp(w-z) \subseteq \omega_{k}(T)$ and
	\begin{align}
	\|\nabla(w-z)\|_{L_2(\omega_k(T)\setminus\omega_{k-2}(T))} &\leq C_{I,\eta} \|\nabla w\|_{L_2(\omega_{k+1}(T)\setminus\omega_{k-3}(T))},\label{annulus-1}\\
	\|\nabla(w-z)\|_{L_2(\omega_k(T))} &\leq C'_{I,\eta} \|\nabla w\|_{L_2(\omega_{k+1}(T))},\label{annulus-2}\\
	\|\nabla z\|_{L_2(\Omega\setminus\omega_{k-2}(T))} &\leq C''_{I,\eta}\|\nabla w\|_{L_2(\Omega\setminus\omega_{k-3}(T))},\label{annulus-3}
	\end{align}	
	where $C_{I,\eta}$, $C'_{I,\eta}$, and $C''_{I,\eta}$ depends on $C_I$, $\rho$, and the bound in \eqref{cutoff-bound}, but not on $k$, $h$, $H$, $T$, or the variations of $A$.
\end{lemma}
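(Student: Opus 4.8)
My plan is to reduce all three bounds, together with the support claim, to a single algebraic identity and two workhorse estimates. Since $w\in V_h$ we have $\mathcal I_h w = w$, and since $w\in V_\f=\ker I_H$ we have $I_H w = 0$. Writing $\eta^T_k w = w-(1-\eta^T_k)w$, applying $\mathcal I_h$ and then $(I-I_H)$, these two facts collapse the definition of $z$ into
\[
w-z=(I-I_H)\,\mathcal I_h\!\big((1-\eta^T_k)\,w\big),\qquad z=(I-I_H)\,\mathcal I_h\!\big(\eta^T_k\,w\big).
\]
For the support claim I would use that by \eqref{cutoff} the function $\eta^T_k$ equals $1$ at every coarse node outside $\omega_k(T)$, so $(1-\eta^T_k)w$ has vanishing fine-nodal values there and is supported in $\overline{\omega_k(T)}$; the locality of $\mathcal I_h$ and $I_H$, together with $I_H w = 0$ (which cancels the coarse part away from that region), then confine $\supp(w-z)$ to $\omega_k(T)$.

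The key local estimate I would establish is that, for $\chi$ continuous and affine on each coarse element and for $w\in V_h$, on every fine element $K$ one has
\[
\|\nabla\mathcal I_h(\chi w)\|_{L_2(K)}\le C\big(\|\chi\|_{L_\infty(K)}\|\nabla w\|_{L_2(K)}+\|\nabla\chi\|_{L_\infty(K)}\|w\|_{L_2(K)}\big).
\]
I would prove this by splitting $\nabla\mathcal I_h(\chi w)=\nabla(\chi w)+\nabla(\mathcal I_h(\chi w)-\chi w)$: the product rule bounds the first term, while the interpolation error of the (fine-elementwise quadratic) product satisfies $\|\nabla(\mathcal I_h(\chi w)-\chi w)\|_{L_2(K)}\lesssim h_K\|D^2(\chi w)\|_{L_2(K)}\lesssim h_K\|\nabla\chi\|_{L_\infty(K)}\|\nabla w\|_{L_2(K)}$, and $h_K\|\nabla\chi\|_{L_\infty}\lesssim 1$ by \eqref{cutoff-bound}. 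The zeroth-order remainder $\|\nabla\chi\|_{L_\infty}\|w\|_{L_2}$ is where membership $w\in V_\f$ enters decisively: since $I_H w=0$, the local bound \eqref{interpolation-local} gives $H_T^{-1}\|w\|_{L_2(T)}=H_T^{-1}\|w-I_Hw\|_{L_2(T)}\le C_I\|\nabla w\|_{L_2(\omega_T)}$, which trades that term for $\|\nabla w\|$ over a one-layer-larger patch.

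With these pieces I would assemble the three estimates by choosing $\chi=1-\eta^T_k$ for \eqref{annulus-1} and \eqref{annulus-2} and $\chi=\eta^T_k$ for \eqref{annulus-3}, using $\|\chi\|_{L_\infty}\le1$ and that $\nabla\eta^T_k$ is supported in the annulus $\omega_k(T)\setminus\omega_{k-1}(T)$ to localize the remainder terms. Local $\nabla(I-I_H)$-stability, again from \eqref{interpolation-local}, converts the bound on $\|\nabla\mathcal I_h(\chi w)\|$ into one on $\|\nabla(w-z)\|$ or $\|\nabla z\|$ at the cost of a further element layer. Summing over the fine and coarse elements of the relevant region and tracking the layer growth (one layer outward from the $(I-I_H)$-stability, one layer inward from the $w\mapsto\nabla w$ conversion) is what produces the slightly enlarged annulus $\omega_{k+1}(T)\setminus\omega_{k-3}(T)$ on the right of \eqref{annulus-1}, and likewise $\omega_{k+1}(T)$ and $\Omega\setminus\omega_{k-3}(T)$ in \eqref{annulus-2} and \eqref{annulus-3}.

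The main obstacle I anticipate is the index bookkeeping, forced by the fact that $\eta^T_k w\notin V_h$: one cannot avoid the Lagrange interpolation $\mathcal I_h$, and hence the product estimate with its zeroth-order remainder, and it is precisely the conversion of $\|w\|_{L_2}$ into $\|\nabla w\|_{L_2}$ via $w\in V_\f$ that loses a fixed number of element layers between the two sides of each inequality. Keeping these layer counts sharp enough to land exactly on the stated patches, while ensuring the constants depend only on $C_I$, $\rho$, and \eqref{cutoff-bound} and not on $k$, $h$, $H$, or the variations of $A$, is the delicate part; everything else is the routine machinery above.
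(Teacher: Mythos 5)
Your proposal is correct and follows essentially the same route as the paper: rewrite $w-z=(I-I_H)\bigl(w-\mathcal I_h(\eta^T_k w)\bigr)$ (your $(1-\eta^T_k)$-form is the same identity), use the local stability \eqref{interpolation-local} of $I_H$, the $H^1$-stability of $\mathcal I_h$ on fine-elementwise quadratic products (which you prove via the standard interpolation error estimate while the paper simply invokes it), the product rule with \eqref{cutoff-bound}, and the conversion $H^{-1}\|w-I_Hw\|_{L_2(T)}\le C_I\|\nabla w\|_{L_2(\omega_T)}$ to absorb the zeroth-order term at the cost of one element layer. The layer bookkeeping and the support argument also match the paper's.
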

\begin{proof}
	We have $\eta^T_{k}=1$ on $\Omega\setminus\omega_k(T)$ and hence
	\begin{align*}
	w-z = w - (I-I_H)w = 0, \quad \text{on } \Omega\setminus \omega_k(T),
	\end{align*}
	since $I_H w=0$ and it follows that $\supp(w-z) \subseteq \omega_k(T)$. 
	
	Now, note that
	\begin{align*}
	w-z = (I-I_H)(w-\mathcal I_h(\eta^T_kw)).
	\end{align*}
	Using the stability of $I_H$ in \eqref{interpolation-local} we derive the bound
	\begin{align*}
	\|\nabla(I-I_H)(w-\mathcal  I_h(\eta^T_kw))&\|_{L_2(\omega_k(T)\setminus\omega_{k-2}(T))}\\&\leq C_I\|\nabla(w-\mathcal I_h(\eta^T_kw))\|_{L_2(\omega_{k+1}(T)\setminus\omega_{k-3}(T))}.
	\end{align*}
	Now, using that the Lagrange interpolation $\mathcal I_h$ is $H^1$-stable for piecewise second order polynomials on shape regular meshes and the bound \eqref{cutoff-bound} we get
	\begin{align*}
	&\|\nabla \mathcal I_h (\eta^T_k w)\|_{L_2(\omega_{k+1}(T)\setminus\omega_{k-3}(T))} \leq C\|\nabla (\eta^T_{k}w)\|_{L_2(\omega_{k+1}(T)\setminus\omega_{k-3}(T))}\\
	&\quad\leq C\|w\nabla \eta^T_{k}\|_{L_2(\omega_{k}(T)\setminus\omega_{k-1}(T))} + C\|\eta^T_{k}\nabla w\|_{L_2(\omega_{k+1}(T)\setminus\omega_{k-1}(T))}\\
	&\quad\leq CH^{-1}\|w-I_Hw\|_{L_2(\omega_{k}(T)\setminus\omega_{k-1}(T))} + C\|\nabla w\|_{L_2(\omega_{k+1}(T)\setminus\omega_{k-1}(T))}\\
	&\quad\leq C\|\nabla w\|_{L_2(\omega_{k+1}(T)\setminus\omega_{k-2}(T))},
	\end{align*}
	where we also have utilized the bounded support of the cut-off function and the bound of $I_H$ in \eqref{interpolation-local}. This completes the bound \eqref{annulus-1}. The bounds in \eqref{annulus-2} and \eqref{annulus-3} follow similarly.
\end{proof}

\begin{lemma}\label{exp-decay-lemma}
	For the Ritz projection \eqref{Ritz-Vf} there exist $\theta \in (0,1)$, such that
	\begin{align}\label{exp-decay}
	\|\nabla R^T_\f v\|_{L_2(\Omega\setminus \omega_k(T))} \leq \theta^k \|\nabla R^T_\f v\|_{L_2(\Omega)}, \quad v \in V_h,
	\end{align}
	where $\theta$ depends on $\rho$ and the contrast $\beta/\alpha$, but not on $k$, $T$, $h$, $H$, or the variations of $A$.
\end{lemma}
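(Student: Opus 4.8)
Write $\phi:=R^T_\f v\in V_\f$. The mechanism behind the decay is the \emph{local} orthogonality coming from \eqref{Ritz-Vf-T}: for any $w\in V_\f$ with $\supp w\cap T=\emptyset$ we have $\varepsilon(w)|_T=0$, hence $\B(\phi,w)=\B(v,w)_T=0$. It is cleanest to prove the decay for the energy norm, i.e.\ to show $\|\phi\|_{\B(\Omega\setminus\omega_k(T))}\le\theta^k\|\phi\|_{\B(\Omega)}$ with $\|\phi\|^2_{\B(S)}:=\int_S A\varepsilon(\phi):\varepsilon(\phi)$; the stated $H^1$-seminorm bound \eqref{exp-decay} then follows from the global norm equivalence \eqref{B-bounds}. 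The strategy is a one-step estimate reducing the energy outside $\omega_k(T)$ to the energy in a fixed-width annulus, followed by iteration.

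For the one-step estimate I take $k$ large enough that $T\subseteq\omega_{k-2}(T)$, put $\eta:=\eta^T_k$, and test with $z:=(I-I_H)\mathcal I_h(\eta\phi)\in V_\f$. By the support discussion preceding Lemma~\ref{bound-annulus}, $\supp z\subseteq\Omega\setminus\omega_{k-2}(T)$, so $\supp z\cap T=\emptyset$ and the orthogonality gives $\B(\phi,z)=0$. Using $\eta\equiv1$ and $\nabla\eta\equiv0$ on $\Omega\setminus\omega_k(T)$ together with the product rule $\varepsilon(\eta\phi)=\eta\varepsilon(\phi)+\tfrac12(\nabla\eta\otimes\phi+\phi\otimes\nabla\eta)$, I bound
\[
\|\phi\|^2_{\B(\Omega\setminus\omega_k(T))}\le\int_\Omega\eta\,A\varepsilon(\phi):\varepsilon(\phi)=\B(\phi,\eta\phi-z)-\int_\Omega A\,\tfrac12(\nabla\eta\otimes\phi+\phi\otimes\nabla\eta):\varepsilon(\phi),
\]
where I used $\B(\phi,z)=0$. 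Crucially, both remaining terms are supported in a fixed-width annulus around level $k$: the commutator lives on $\{\nabla\eta\neq0\}\subseteq\omega_k(T)\setminus\omega_{k-1}(T)$, while $\eta\phi-z=(\eta\phi-\mathcal I_h(\eta\phi))+I_H\mathcal I_h(\eta\phi)$ vanishes both in the far field (where $\eta\phi=\phi\in V_h$ and $I_H\phi=0$) and on $\omega_{k-2}(T)$ (where $\eta\phi=0$).

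It then remains to bound these annular contributions by the annular energy. By Cauchy--Schwarz, $\B(\cdot,\cdot)\le\beta\|\nabla\cdot\|^2_{L_2}$ and Lemma~\ref{bound-annulus}, the term $\B(\phi,\eta\phi-z)$ is controlled by $C\beta\|\nabla\phi\|_{L_2(\mathrm{ann})}\|\nabla\phi\|_{L_2(\mathrm{ann}')}$ on nested annuli $\mathrm{ann}\subseteq\mathrm{ann}'=\omega_{k+1}(T)\setminus\omega_{k-3}(T)$, and the commutator by $C\beta H^{-1}\|\phi\|_{L_2(\mathrm{ann})}\|\nabla\phi\|_{L_2(\mathrm{ann})}$, where $H^{-1}\|\phi\|_{L_2(\mathrm{ann})}=H^{-1}\|\phi-I_H\phi\|_{L_2(\mathrm{ann})}\le C\|\nabla\phi\|_{L_2(\mathrm{ann}')}$ by \eqref{interpolation-local}. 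A local Korn--Poincar\'e inequality on the shape-regular annulus (with rigid-body part eliminated by $I_H\phi=0$, and constant depending only on $\rho$) turns the annular gradient energy back into annular strain energy, so that I obtain $\|\phi\|^2_{\B(\Omega\setminus\omega_k(T))}\le\tilde C\,\|\phi\|^2_{\B(\mathrm{ann}')}$ with $\tilde C\sim\beta/\alpha$. Setting $E_j:=\|\phi\|^2_{\B(\Omega\setminus\omega_j(T))}$ (nonincreasing in $j$), the annulus equals $E_{k-3}-E_{k+1}$, so $E_k\le\tilde C(E_{k-3}-E_{k+1})$; since $E_{k+1}\le E_k$ this gives $E_{k+1}\le q\,E_{k-3}$ with $q=\tilde C/(1+\tilde C)\in(0,1)$. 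Iterating over blocks of width four and absorbing constants and the finitely many small $k$ into a slightly enlarged $\theta\in(0,1)$ yields the claimed decay, with $\theta$ depending on $\rho$, $C_I$, $C_\mathrm{ko}$ and the contrast $\beta/\alpha$ (through $q$) but not on $k$, $T$, $h$, $H$ or the variations of $A$.

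I expect the main obstacle to be precisely the absorption of the elasticity-specific commutator term $A\,\mathrm{sym}(\nabla\eta\otimes\phi):\varepsilon(\phi)$ into the annular strain energy: the local interpolation estimate \eqref{interpolation-local} only controls $H^{-1}\|\phi-I_H\phi\|_{L_2}$ by the full gradient, so closing the recursion in the energy norm requires a local Korn--Poincar\'e inequality on the annuli whose constant is uniform in $k,h,H$ and depends only on the shape regularity --- this is the step that has no scalar analogue. Secondary to that is the localization bookkeeping: since $\eta\phi$ lies in neither $V_\f$ nor $V_h$, one routes through $\mathcal I_h$ and the splitting $\mathcal I_h(\eta\phi)=z+I_H\mathcal I_h(\eta\phi)$, and must check that every piece --- in particular the coarse part $I_H\mathcal I_h(\eta\phi)$, whose nominal support $\Omega\setminus\omega_{k-2}(T)$ is large --- concentrates in an annulus of width independent of $k$, so that the contraction constant $\tilde C$ remains a fixed multiple of the contrast $\beta/\alpha$.
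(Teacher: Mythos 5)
Your setup---the corrector $z=(I-I_H)\mathcal I_h(\eta^T_k R^T_\f v)$, the orthogonality $\B(R^T_\f v,z)=\B(v,z)_T=0$, the reduction to a fixed-width annulus via Lemma~\ref{bound-annulus}, and the iteration over blocks of width four---is exactly the paper's. The difference is the quantity you iterate, and that is where the gap sits. You run the recursion on the \emph{localized energy} $E_j=\|\phi\|^2_{\B(\Omega\setminus\omega_j(T))}$, so after the annulus reduction you must convert $\beta\|\nabla\phi\|^2_{L_2(\mathrm{ann}')}$ back into $\alpha^{-1}\|\phi\|^2_{\B(\mathrm{ann}'')}$, i.e.\ you need a Korn inequality on the annulus $\omega_{k+1}(T)\setminus\omega_{k-3}(T)$ whose constant is uniform in $k$. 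You correctly flag this as the crux, but you do not prove it, and it is not a routine fact: the annulus has fixed width $\sim H$ but diameter $\sim kH$, and Korn's second inequality on such increasingly thin shells is precisely the regime where the constant can degenerate. Nor does $I_H\phi=0$ obviously rescue it: the natural attempt (Korn on each coarse patch modulo rigid motions, then control of the rotational part via $H^{-1}\|\phi-I_H\phi\|_{L_2}\le C\|\nabla\phi\|$ from \eqref{interpolation-local}) reintroduces the full gradient on the right-hand side with a constant $\ge 1$ and does not close. And the stakes are real: if the local Korn constant grew even polynomially in $k$, the resulting product $\prod_j C_j/(1+C_j)$ would converge to a positive number and you would obtain no decay at all.

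The paper is structured precisely to avoid this. It iterates the plain $H^1$-seminorm $\|\nabla R^T_\f v\|^2_{L_2(\Omega\setminus\omega_j(T))}$, for which the telescoping over annuli is exact, and it invokes Korn only once and only \emph{globally}: since $\mathcal I_h(\eta^T_k R^T_\f v)=R^T_\f v$ on $\Omega\setminus\omega_k(T)$ and $z\in V_\f\subseteq V$, one has
\begin{align*}
\|\nabla R^T_\f v\|^2_{L_2(\Omega\setminus\omega_k(T))}\le\|\nabla z\|^2_{L_2(\Omega)}\le C_\mathrm{ko}^2\alpha^{-1}\B(z,z),
\end{align*}
by \eqref{B-bounds}, after which only the trivial pointwise bound $\|\varepsilon(w)\|_{L_2(S)}\le\|\nabla w\|_{L_2(S)}$ is ever used on subsets. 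Replacing $\phi$ on the far field by the globally defined $z$ and applying the global inequality \eqref{B-bounds} to $z$ is the one move your argument is missing; with it, the localized-energy bookkeeping, the commutator term $A\,\mathrm{sym}(\nabla\eta\otimes\phi):\varepsilon(\phi)$, and the unproven local Korn--Poincar\'e inequality all become unnecessary.
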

\begin{proof}
	Fix an element $T\in\mathcal T_H$ and let $\eta^T_{k}$ be a cut-off function as in \eqref{cutoff}, and define $z$ as in Lemma~\ref{bound-annulus} with $w=R^T_\f v$ such that
	\begin{align}\label{support}
	z:=(I-I_H)\mathcal I_h (\eta^T_{k} R^T_\f v) \in V_\f(\Omega\setminus\omega_{k-2}(T)).
	\end{align}
	Since $\eta^T_{k}=1$ on $\Omega\setminus\omega_k(T)$, we have the identity $\mathcal I_h\eta^T_{k,l}R^T_\f v=R^T_\f v$ on $\Omega\setminus\omega_k(T)$. Using this and the bounds \eqref{B-bounds} for $\B(\cdot,\cdot)$ we get
	\begin{align}\label{expdecay:bound1}
	\|\nabla R^T_\f v\|^2_{L_2(\Omega\setminus \omega_k(T))}
	&=\|\nabla (I-I_H)R^T_\f v\|^2_{L_2(\Omega\setminus \omega_k(T))}\leq\|\nabla z\|^2_{L_2(\Omega)}\\&\leq C_\mathrm{ko}^2\alpha^{-1}\B(z,z).\notag
	\end{align}
	Now, due to \eqref{support} and \eqref{Ritz-Vf-T}, the following equality holds
	\begin{align*}
	\mathcal{B}(R^T_\f v,z) = \mathcal B(v, z)_T = 0,
	\end{align*}
	since $z$ does not have support on the element $T$. Using this and the fact that $\supp (z -R^T_\f v) \cap \supp z \subseteq \omega_k(T)\setminus \omega_{k-2}(T)$ we have
	\begin{align}\label{expdecay:bound2}
	\mathcal{B}(z, z)&= \mathcal{B}(z-R^T_\f v,z)= \int_{\omega_k(T)\setminus \omega_{k-2}(T)} A \varepsilon(z-R^T_\f v):\varepsilon(z)\,\mathrm{d}x\\
	&\leq 
	\beta \|\nabla (z-R^T_\f v)\|_{L_2(\omega_k(T)\setminus \omega_{k-2}(T))} \|\nabla z\|_{L_2(\omega_k(T)\setminus \omega_{k-2}(T))}\notag \\
	&\leq 
	\beta \|\nabla (z-R^T_\f v)\|_{L_2(\omega_k(T)\setminus \omega_{k-2}(T))} (\|\nabla (z-R^T_\f v)\|_{L_2(\omega_k(T)\setminus \omega_{k-2}(T))}\notag \\&\qquad+\|\nabla R^T_\f v\|_{L_2(\omega_k(T)\setminus \omega_{k-2}(T))})\notag\\
	&\stackrel{\eqref{annulus-1}}{\leq} C_{I,\eta}(C_{I,\eta}+1)\beta \|\nabla R^T_\f v\|^2_{L_2(\omega_{k+1}(T)\setminus\omega_{k-3}(T))},\notag
	\end{align}
	Combining \eqref{expdecay:bound1} and \eqref{expdecay:bound2} we have
	\begin{align*}
	\|\nabla R^T_\f v\|^2_{L_2(\Omega\setminus \omega_k(T))} &\leq C'\|\nabla R^T_\f v\|^2_{L_2(\omega_{k+1}(T)\setminus\omega_{k-3}(T))} \\&\leq C'(\|\nabla R^T_\f v\|^2_{L_2(\Omega\setminus\omega_{k-3}(T))}-\|\nabla R^T_\f v\|^2_{L_2(\Omega\setminus\omega_{k+1}(T))}),
	\end{align*}
	where $C'=C_\mathrm{ko}^2C_{I,\eta}(C_{I,\eta}+1)\beta/\alpha$. Thus
	\begin{align*}
	\|\nabla R^T_\f v\|^2_{L_2(\Omega\setminus \omega_{k+1}(T))} &\leq \frac{C'}{1+C'}\|\nabla R^T_\f v\|^2_{L_2(\Omega\setminus\omega_{k-3}(T))}.
	\end{align*}
	An iterative application of this result and relabeling $k+1 \rightarrow k$ yields \eqref{exp-decay}, with $\theta = ((\frac{C'}{1+C'})^{1/4})^{1/2} <1$.
\end{proof}

\begin{lemma}\label{Ritzconv-loc}
	For the Ritz projections \eqref{Ritz-Vf-T} and \eqref{Ritz-Vfk} we have the bound
\begin{align*}
	\|\sum_{T\in \mathcal T_H}\nabla(R^T_\f v - R^T_{\f,k}v)\|_{L_2(\Omega)}\leq Ck^{d/2}\theta^k\frac{\beta}{\alpha}\bigg(\sum_{T \in \mathcal T_H} \|\nabla R^T_\f v\|^2_{L_2(\Omega)}\bigg)^{1/2}, \quad v\in V_h,
\end{align*}
	with $\theta$ as in Lemma~\ref{exp-decay-lemma} and $C$ depends on $C_\mathrm{ko},C'_{I,\eta},$ and $C''_{I,\eta}$.
\end{lemma}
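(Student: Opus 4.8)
The plan is to control the localization error element by element using the exponential decay from Lemma~\ref{exp-decay-lemma} together with a best-approximation argument, and then to assemble the global bound by a summation over $T$ in which the exponential decay is exploited a second time to compensate for the fact that the element correctors have global support.

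For the per-element step I fix $T\in\mathcal T_H$ and write $e^T:=R^T_\f v-R^T_{\f,k}v$. By \eqref{Ritz-Vf-T} and \eqref{Ritz-Vfk} both $R^T_\f v$ and $R^T_{\f,k}v$ are $\B$-projections of the same functional $w\mapsto\B(v,w)_T$, onto $V_\f$ and onto the subspace $V_\f(\omega_k(T))\subseteq V_\f$ respectively, so $\B(e^T,w)=0$ for all $w\in V_\f(\omega_k(T))$ and $R^T_{\f,k}v$ is the $\B$-best approximation of $R^T_\f v$ in $V_\f(\omega_k(T))$. I would therefore compare against the competitor $\psi:=R^T_\f v-z$ with $z:=(I-I_H)\mathcal I_h(\eta^T_k R^T_\f v)$ from Lemma~\ref{bound-annulus}; since $\supp(R^T_\f v-z)\subseteq\omega_k(T)$ one has $\psi\in V_\f(\omega_k(T))$, whence $\|e^T\|_{\B(\Omega)}\le\|R^T_\f v-\psi\|_{\B(\Omega)}=\|z\|_{\B(\Omega)}\le\sqrt\beta\,\|\nabla z\|_{L_2(\Omega)}$. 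Combining the bound \eqref{annulus-3} for $\|\nabla z\|_{L_2(\Omega)}$ with the decay \eqref{exp-decay} and converting back through the coercivity in \eqref{B-bounds} gives the element estimate $\|\nabla e^T\|_{L_2(\Omega)}\le C\sqrt{\beta/\alpha}\,\theta^k\|\nabla R^T_\f v\|_{L_2(\Omega)}$. I would also record that, since $R^T_{\f,k}v$ vanishes outside $\omega_k(T)$, one has $e^T=R^T_\f v$ there, so that \eqref{exp-decay} directly yields $\|\nabla e^T\|_{L_2(\Omega\setminus\omega_j(T))}\le\theta^j\|\nabla R^T_\f v\|_{L_2(\Omega)}$ for every $j\ge k$.

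For the summation I set $\phi:=\sum_T e^T$ and start from $\alpha C_\mathrm{ko}^{-2}\|\nabla\phi\|^2_{L_2(\Omega)}\le\B(\phi,\phi)=\sum_{T,S}\B(e^T,e^S)$, splitting the double sum into near pairs, those with $\omega_k(T)\cap\omega_k(S)\neq\emptyset$ (of which each $T$ has only $O(k^d)$), and far pairs. For the near pairs I use the crude bound $\B(e^T,e^S)\le\beta\|\nabla e^T\|_{L_2(\Omega)}\|\nabla e^S\|_{L_2(\Omega)}$ together with the element estimate and a finite-overlap/AM--GM count, which produces exactly the factor $k^d\theta^{2k}(\beta/\alpha)^2$ multiplying $\sum_T\|\nabla R^T_\f v\|^2_{L_2(\Omega)}$. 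For the far pairs I split $\Omega$ into the region closer to $T$ and the region closer to $S$ and, on each region, estimate the factor belonging to the distant corrector by its decay, gaining $\theta^{m/2}$ where $m$ is the coarse-layer distance between $T$ and $S$; a Schur/Young summation of the weights $\theta^{m/2}$ over the shells around a fixed $T$ converges and is itself $O(k^{d-1}\theta^{k})$, so the far pairs are subordinate to the near-pair bound. Taking square roots then delivers \eqref{locgfemconv-est} with the stated $\beta/\alpha$ and $k^{d/2}\theta^k$.

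The main obstacle is precisely this last point. Because $R^T_\f v$, and hence $e^T$, has global support, the naive finite-overlap argument that would supply the $k^{d/2}$ factor is unavailable, and the off-diagonal interactions $\B(e^T,e^S)$ do not vanish. The exponential decay of the correctors must therefore be invoked a second time, quantitatively, to show that these tail interactions decay geometrically in the distance between $T$ and $S$ fast enough that their total is controlled by $\sum_T\|\nabla R^T_\f v\|^2_{L_2(\Omega)}$ rather than by $N\sum_T\|\nabla R^T_\f v\|^2_{L_2(\Omega)}$. Arranging the shell counts and the powers of $\theta$ so that the far pairs remain dominated by the near pairs, and so that the accumulated factors of $\beta/\alpha$ collapse to the single power stated in the lemma, is the delicate part of the bookkeeping.
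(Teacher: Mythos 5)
Your per-element estimate is exactly the paper's: the best-approximation property of $R^T_{\f,k}v$ in $V_\f(\omega_k(T))$ tested against the competitor $R^T_\f v-z$ from Lemma~\ref{bound-annulus}, followed by \eqref{annulus-3} and \eqref{exp-decay}, gives $\|R^T_\f v-R^T_{\f,k}v\|_{\B(\Omega)}\leq C''_{I,\eta}\sqrt{\beta}\,\theta^k\|\nabla R^T_\f v\|_{L_2(\Omega)}$, which is precisely the paper's inequality \eqref{bound-RT}. The assembly step is where you diverge, and where your sketch falls short of the lemma as stated. Expanding $\B(\phi,\phi)=\sum_{T,S}\B(e^T,e^S)$ and treating the far pairs by the tail decay of the distant corrector forces you to sum a geometric series over shells, $\sum_{m\geq 2k}m^{d-1}\theta^{m/2}$; this is indeed $O(k^{d-1}\theta^{k})$, but only with an implicit constant of order $(1-\sqrt{\theta}\,)^{-d}$. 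Since $\theta$ depends on the contrast $\beta/\alpha$ (Lemma~\ref{exp-decay-lemma}), your final constant does too, whereas the lemma asserts that $C$ depends only on $C_\mathrm{ko}$, $C'_{I,\eta}$ and $C''_{I,\eta}$. For the same reason your claim that the far pairs are ``subordinate to the near-pair bound'' holds only up to this contrast-dependent factor. This is not a sign error but a genuine quantitative shortfall: the point of the lemma, and of Theorem~\ref{locgfemconv} built on it, is that the contrast enters only through the explicit factors $\theta^k$ and $\beta/\alpha$, not through a constant hidden in $C$.

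The paper avoids the double sum altogether. Writing $e_\f=\sum_T e^T$, it estimates $\B(e_\f,e_\f)=\sum_T\B(e^T,e_\f)$ and, for each $T$, replaces the global test function $e_\f$ by $e_\f-z^T_e$ with $z^T_e:=(I-I_H)\mathcal I_h(\eta^T_{k+2}e_\f)$: the correction $z^T_e$ is supported outside $\omega_k(T)$, so it is $\B$-orthogonal to $e^T$ (it vanishes on $T$, killing $\B(v,\cdot)_T$, and its support is disjoint from that of $R^T_{\f,k}v$), while $e_\f-z^T_e$ is supported in $\omega_{k+2}(T)$ by Lemma~\ref{bound-annulus}. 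A single Cauchy--Schwarz in the sum over $T$ then produces $\big(\sum_T\|\nabla e_\f\|^2_{L_2(\omega_{k+3}(T))}\big)^{1/2}\leq C'_\rho k^{d/2}\|\nabla e_\f\|_{L_2(\Omega)}$ from the finite-overlap count alone, with no geometric series and hence no additional $\theta$-dependence. If you want to keep your double-sum structure you must resum the off-diagonal interactions without ever dividing by $1-\theta$; the device of cutting off the \emph{global} error away from each $T$ is exactly the mechanism that makes this unnecessary.
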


\begin{proof}
	Define $e_\f:=\sum_{T \in \mathcal T_H} R^T_\f v- R^T_{\f,k}v$ and let $\eta^T_{k+2}$ be the cut-off function as defined in \eqref{cutoff}. Since $e_\f \in V_\f$, we define $z^T_e:=(I-I_H)\mathcal I_h (\eta^T_{k+2} e_\f)$ as in Lemma~\ref{bound-annulus} and note that $\supp z^T_e \subseteq \Omega\setminus \omega_k(T)$. Thus, due to the fact that $\supp R^T_{\f,k}v \cap \supp z^T_e =\emptyset$ and \eqref{Ritz-Vf-T}, we have
	\begin{align*}
	\B(R^T_\f v- R^T_{\f,k}v,z^T_e) = \B(R^T_\f v, z^T_e) = \B(v, z^T_e)_T = 0.
	\end{align*}
	Using this and the bounds \eqref{B-bounds} we derive
	\begin{align}\label{bound-e-1}
	\|\nabla e_\f\|^2_{L_2(\Omega)} &\leq C_\mathrm{ko} \alpha^{-1} \B(e_\f,e_\f) = C_\mathrm{ko} \alpha^{-1}\sum_{T \in \mathcal{T}_H} \B(R^T_\f v-R^T_{\f,k}v,e_\f) \\&= C_\mathrm{ko} \alpha^{-1}\sum_{T \in \mathcal{T}_H} \B(R^T_\f v-R^T_{\f,k}v, e_\f - z^T_e).\notag\\
	&\leq C_\mathrm{ko} \sqrt{\beta}\alpha^{-1}\sum_{T \in \mathcal{T}_H} \|R^T_\f v-R^T_{\f,k}v\|_{\B(\Omega)} \|\nabla(e_\f - z^T_e)\|_{L_2(\omega_{k+2}(T))}. \notag
	\end{align}
	Now, we use Cauchy-Schwarz inequality for sums and Lemma~\ref{bound-annulus} to get
	\begin{align}\label{bound-e-2}
	&\sum_{T \in \mathcal{T}_H} \|R^T_\f v-R^T_{\f,k}v\|_{\B(\Omega)} \|\nabla(e_\f - z^T_e)\|_{L_2(\omega_{k+2}(T))} \\
	&\ \, \stackrel{\eqref{annulus-2}}{\leq} C'_{I,\eta} \Big(\sum_{T \in \mathcal{T}_H} \|R^T_\f v-R^T_{\f,k}v\|^2_{\B(\Omega)}\Big)^{1/2} \Big(\sum_{T\in\mathcal T_H}\|\nabla e_\f\|^2_{L_2(\omega_{k+3}(T))}\Big)^{1/2} \notag\\
	&\quad\leq C'_{I,\eta} C'_\rho k^{d/2} \Big(\sum_{T \in \mathcal{T}_H} \|R^T_\f v-R^T_{\f,k}v\|^2_{\B(\Omega)}\Big)^{1/2}\|\nabla e_\f\|_{L_2(\Omega)}.\notag
	\end{align}
	In the last inequality we have used the total number of patches overlapping an element $T$ is bounded by $C'_\rho k^{d/2}$, where $C'_\rho$ is a constant depending on the shape regularity of the mesh.
	
	It remains to bound $\|R^T_\f v-R^T_{\f,k}v\|_{\B(\Omega)}$. For this purpose we define $z_v=(I-I_H)\mathcal I_h (\eta^T_{k} R^T_\f v)$ as in Lemma~\ref{bound-annulus}. Recall that $R^T_\f v-z_v \in V_\f(\omega_k(T))$. Now, we use Galerkin orthogonality to derive
	\begin{align*}
	\|R^T_\f v-R^T_{\f,k}v\|_{\B(\Omega)} \leq \|R^T_\f v - w\|_{\B(\Omega)}, \quad \forall w \in V_\f(\omega_k(T)).
	\end{align*}
	Thus, with $w=R^T_\f v-z_v \in V_\f(\omega_k(T))$ we have
	\begin{align*}
	\|R^T_\f v-R^T_{\f,k}v\|_{\B(\Omega)} &\leq \|z_v\|_{\B(\Omega)} \leq \sqrt{\beta}\|\nabla z_v\|_{L_2(\Omega)} \leq \sqrt{\beta}\|\nabla z_v\|_{L_2(\Omega\setminus \omega_{k-2})} \\&\leq C''_{I,\eta}\sqrt{\beta}\|\nabla R^T_\f v\|_{L_2(\Omega\setminus\omega_{k-3})}.
	\end{align*}
	Using Lemma~\ref{exp-decay-lemma} we thus have
	\begin{align}\label{bound-RT}
	\|\nabla(R^T_\f v-R^T_{\f,k}v)\|_{L_2(\Omega)} \leq  C''_{I,\eta}\sqrt{\beta}\theta^k\|\nabla R^T_\f v\|_{L_2(\Omega)}.
	\end{align}
	Combining \eqref{bound-e-1}, \eqref{bound-e-2}, and \eqref{bound-RT}, concludes the proof.
\end{proof}

\begin{remark}
	Using the same techniques as in Lemma~\ref{exp-decay-lemma} and Lemma~\ref{Ritzconv-loc} we can prove (since the right hand side still has support only on a triangle $T\in \mathcal T_H$) exponential decay also for the Neumann boundary correctors
	\begin{align*}
	\|\nabla(\tilde b^T_{\f} - \tilde b^T_{\f,k})\|_{L_2(\Omega)} \leq Ck^{d/2}\theta^k\frac{\beta}{\alpha}\bigg(\sum_{T \in \mathcal T_H} \|\nabla \tilde b^T_\f \|^2_{L_2(\Omega)}\bigg)^{1/2}, \quad v\in V_h,
	\end{align*}
	with $\theta$ as in Lemma~\ref{exp-decay-lemma}.
\end{remark}

We are now ready to prove Theorem~\ref{locgfemconv}.
\begin{proof}[Proof of Theorem~\ref{locgfemconv}]
	Recall that $u_h=u_{0,h} + g_h$ and $u_{\ms,k} = u_{0,\ms,k} + \tilde b_{\f,k} + g_h - R_{\f,k}g_h$. Due to \eqref{fem} and \eqref{gfem-loc} we have the Galerkin orthogonality
	\begin{align*}
	\B(u_h-u_{\ms,k},v) = 0, \quad \forall v \in V_{\ms,k},
	\end{align*} 
	which implies
	\begin{align*}
	\|u_h-u_{\ms,k}\|_{\B(\Omega)}\leq \|u_h-v-\tilde b_{\f,k} - g_h +R_{\f,k}\|_{\B(\Omega)}, \quad \forall v \in V_{\ms,k}.
	\end{align*}
	Let $u_\ms=u_{0,\ms} + \tilde b_\f + g_h - R_\f g_h$ be the solution to \eqref{gfem}. Since $u_{0,\ms}\in V_\ms$ and $u_{0,\ms,k}\in V_{\ms,k}$ there exist $v_H, v_{H,k} \in V_H$, such that
	\begin{align*}
	u_{0,\ms} = v_H - R_\f v_H, \quad u_{0,\ms,k} = v_{H,k} - R_{\f,k} v_{H,k}.
	\end{align*}
	Using the Galerkin orthogonality with $v = v_H - R_{\f,k} v_H \in V_{\ms,k}$ we have
	\begin{align*}
	&\|u_h - u_{\ms,k}\|_{\B(\Omega)} \leq \|u_h - v_H + R_{\f,k}v_H-\tilde b_{\f,k} - g_h +R_{\f,k}g_h\|_{\B(\Omega)} \\
	& \quad \leq \|u_h - v_H + R_\f v_H -\tilde b_\f - g_h +R_\f g_H\|_{\B(\Omega)} + \|R_{\f,k}v_H - R_\f v_H\|_{\B(\Omega)} \\&\qquad+ \|\tilde b_{\f,k} - \tilde b_\f\|_{\B(\Omega)} + \|R_{\f,k}g_h - R_\f g_h\|_{\B(\Omega)}),
	\end{align*}
	From \eqref{error-ideal} in Lemma~\ref{gfemconv} we have
	\begin{align*}
	\|u_h - v_H + R_\f v_H -\tilde b_\f -g_h +R_\f g_H\|_{\B(\Omega)} &= \|u_h - u_\ms\|_{\B(\Omega)} \\&\leq C_\rho C_\mathrm{ko}/\sqrt{\alpha}H\|f\|_{L_2(\Omega)},
	\end{align*}
	and due to Lemma~\ref{Ritzconv-loc} and \eqref{Ritz-Vf-T} we have
	\begin{align*}
	\|R_{\f,k}v_H-R_\f v_H\|^2_{\B(\Omega)}&\leq \beta\|\nabla(R_{\f,k}v_H-R_\f v_H)\|^2_{L_2(\Omega)}\\&\leq C\beta^3/\alpha^2 k^d\theta^{2k}\sum_{T\in\mathcal T_H}\|\nabla R^T_\f v_H\|^2_{L_2(\Omega)}\\
	&\leq C\beta^3/\alpha^2 k^d\theta^{2k}\sum_{T\in\mathcal T_H}\|\nabla v_H\|^2_{L_2(T)} 
	\\&= C\beta^3/\alpha^2 k^d\theta^{2k}\|\nabla v_H\|^2_{L_2(\Omega)}.
	\end{align*}
	Now, since $u_{0,\ms}$ satisfies \eqref{gfem} we deduce the stability estimate
	\begin{align*}
	\|u_{0,\ms}\|_{\B(\Omega)} &\leq C(1/\sqrt{\alpha}(\|f\|_{L_2(\Omega)} + \|b\|_{L_2(\Gamma_N)}) + \|\tilde b_\f\|_{\B(\Omega)} + \|g_h-R_\f g_h\|_{\B(\Omega)})\\
	&\leq C/\sqrt{\alpha}(\|f\|_{L_2(\Omega)} + \|b\|_{L_2(\Gamma_N)} + \sqrt{\alpha}\|g_h\|_{\B(\Omega)}),
	\end{align*}
	where we have used stability derived from \eqref{neumann-corr} and \eqref{Ritz-Vf} in the last inequality. Hence, using that $I_HR_\f v_H=0$ and the stability of $I_H$ \eqref{interpolation-global}, we get
	\begin{align*}
	\|\nabla v_H\|_{L_2(\Omega)}&=\|\nabla I_H(v_H - R_\f v_H)\|_{L_2(\Omega)}\leq C\|\nabla u_{0,\ms}\|_{L_2(\Omega)} \leq C/\sqrt{\alpha}\|u_{0,\ms}\|_{\B(\Omega)} \\
	&\leq C/\alpha(\|f\|_{L_2(\Omega)} + \|b\|_{L_2(\Gamma_N)} + \sqrt{\alpha}\|g_h\|_{\B(\Omega)}).
	\end{align*}
	Similarly, we deduce the bounds
	\begin{align*}
	\|\tilde b_{\f,k} - \tilde b_\f\|^2_{\B(\Omega)} &\leq C\beta^3/\alpha^2k^d\theta^{2k} \sum_{\substack{T\in\mathcal T_H\\T\cap \Gamma_N \neq \emptyset}}\|\nabla \tilde b^T_\f\|^2_{L_2(\Gamma_N)}\\&\leq C\beta^3/\alpha^4k^d\theta^{2k}\|b\|^2_{L_2(\Gamma_N)}.\\
	\|R_{\f,k}g_h - R_\f g_h\|^2_{\B(\Omega)} &\leq C\beta^3/\alpha^2k^d\theta^{2k} \sum_{T\in\mathcal T_H}\|\nabla R^T_\f g_h\|^2_{L_2(\Omega)}\\&\leq C\beta^3/\alpha^3 k^d\theta^{2k}\|g_h\|^2_{\B(\Omega)}.
	\end{align*}
	Thus we have
	\begin{align*}
	\|\nabla &(u_h-u_{\ms,k})\|_{L_2(\Omega)}\leq C_\mathrm{ko}/\sqrt{\alpha}\|u_h-u_{\ms,k}\|_{\B(\Omega)} \\&\leq C/\alpha H\|f\|_{L_2(\Omega)}+ C\sqrt{\beta^3/\alpha^5}k^{d/2}\theta^{k}(\|f\|_{L_2(\Omega)} + \|b\|_{L_2(\Gamma_N)} + \sqrt{\alpha}\|g_h\|_{\B(\Omega)}).
	\end{align*}
	The proof is now complete.
\end{proof}

\begin{remark}
	To achieve linear convergence in Theorem~\ref{locgfemconv} the size of the patches for the localization should be chosen proportional to $\log H^{-1}$, i.e. $k=c\log(H^{-1})$ for some constant $c$.  	
\end{remark}

\section{Numerical Experiment}\label{sec:experiments}
In this section we perform two numerical experiments to test the convergence rate obtained in Theorem~\ref{locgfemconv}. The first experiment shows that linear convergence is obtained, in the $H^1$-norm, for a problem with multiscale data. The second experiment shows that the locking effect is reduced for a problem with high value of $\lambda$. We refer to \cite{Henning16} for a discussion on how to implement this type of generalized finite elements efficiently.

We consider an isotropic medium, see Remark~\ref{isotropic}, on the unit square in $\R^2$. Recall that the stress tensor in the isotropic case takes the form
\begin{align*}
\sigma(u)=2\mu\varepsilon(u)+\lambda(\nabla\cdot u)I,
\end{align*}
where $\mu$ and $\lambda$ are the Lam\'{e} coefficients. For simplicity we consider only homogeneous Dirichlet boundary conditions, that is, $\Gamma_D=\partial \Omega$ and $g=0$. The body forces are set to $f=[1\ 1]^\intercal$. 

In the first experiment, we test the convergence on two different setups for the Lam\'{e} coefficients, one with multiscale features, and one with constant coefficients $\mu=\lambda=1$. For the problem with multiscale features we choose $\mu$ and $\lambda$ to be discontinuous on a Cartesian grid of size $2^{-5}$. The values at the cells are chosen randomly between $0.1$ and $10$. The resulting coefficients are shown in Figure~\ref{lame-multiscale}.

\begin{figure}[h]
	\centering
	\begin{subfigure}[b]{0.48\textwidth}
		\includegraphics[width=\textwidth]{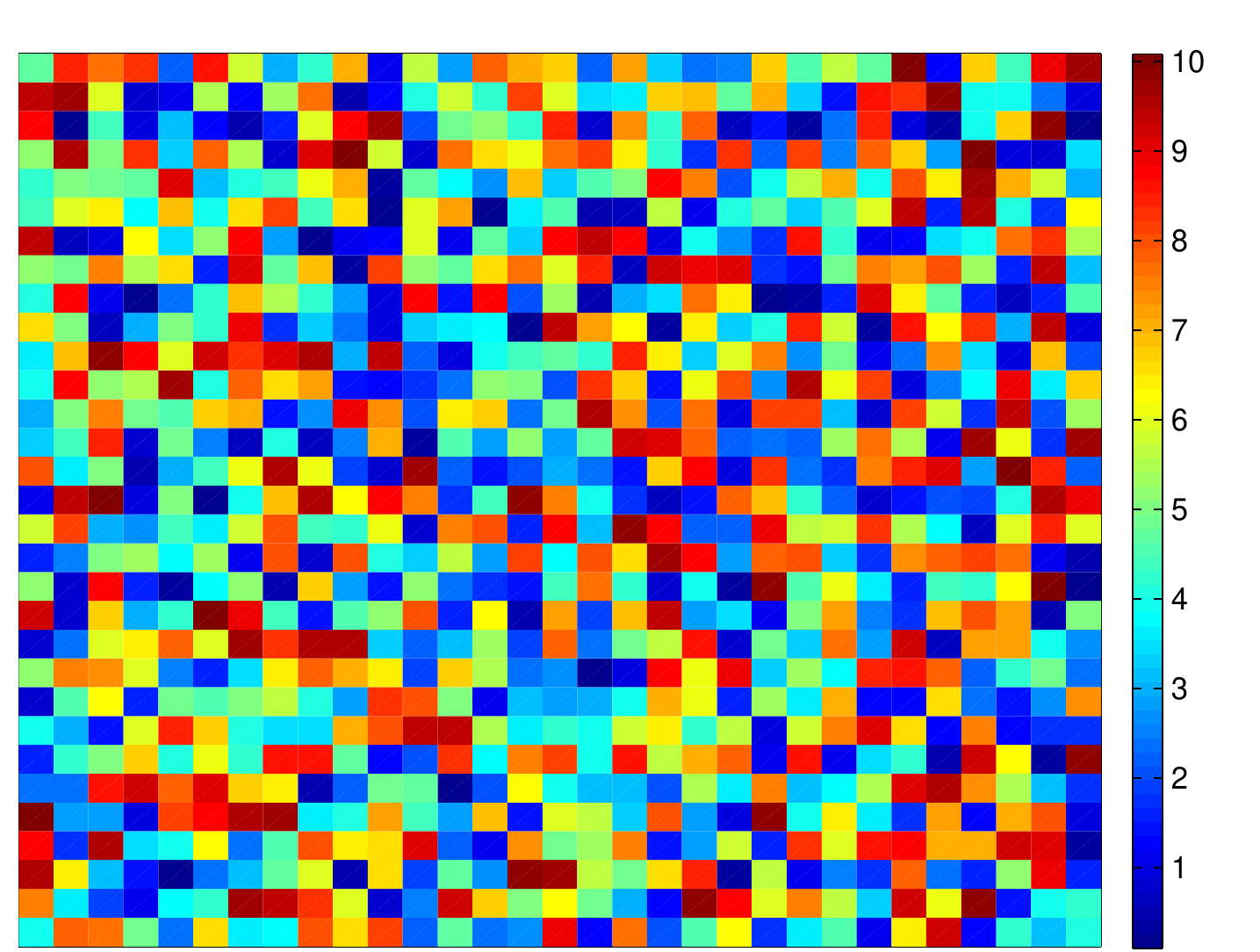}
		\caption{Lam\'{e} coefficient $\mu$}
	\end{subfigure}
	~
	\begin{subfigure}[b]{0.48\textwidth}
		\centering
		\includegraphics[width=\textwidth]{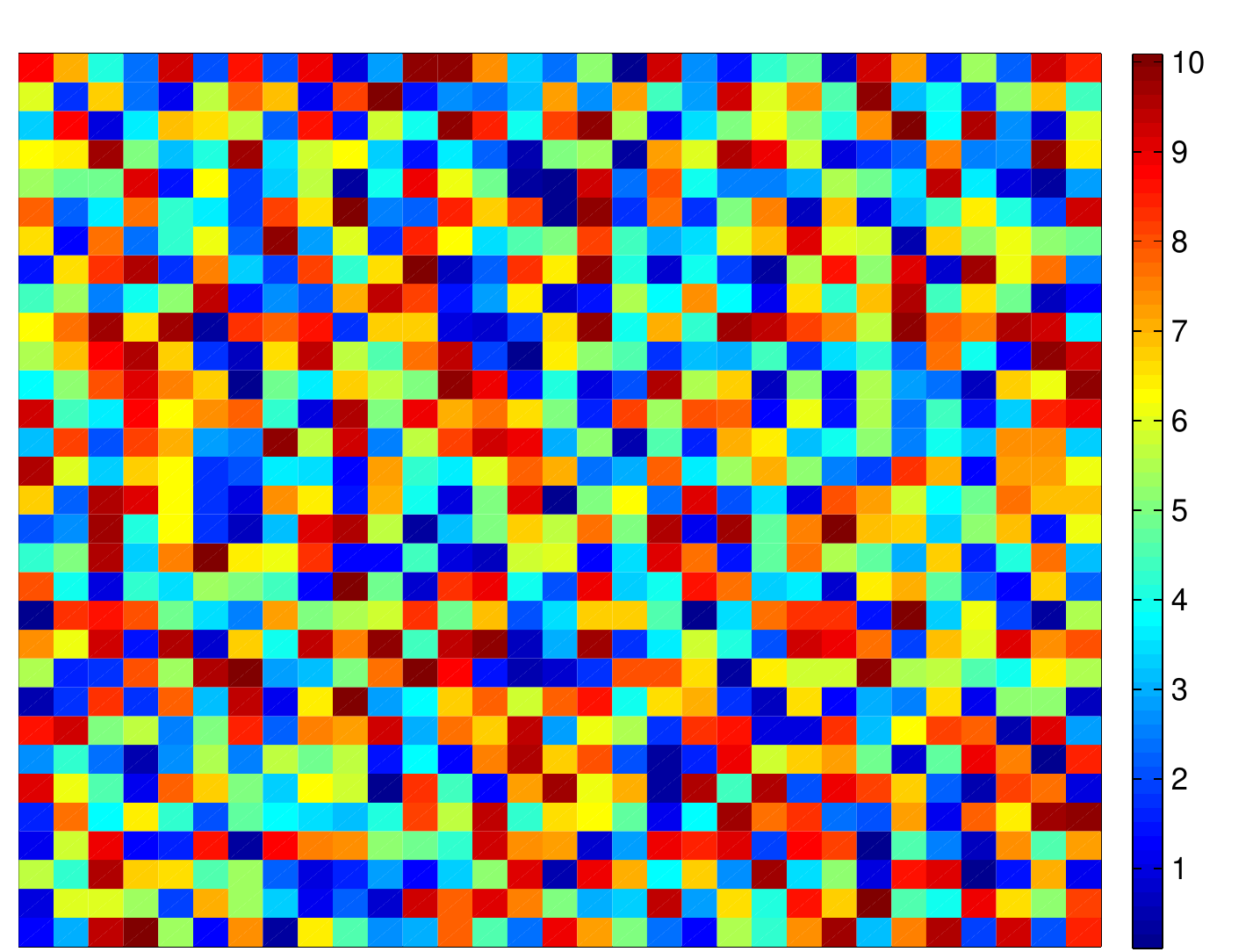}
		\caption{Lam\'{e} coefficient $\lambda$} 
	\end{subfigure}
	\caption{Lam\'{e} coefficients with multiscale features.}\label{lame-multiscale}
\end{figure}

For the numerical approximations we discretize the domain with a uniform triangulation. The reference solution $u_h$ in \eqref{fem} is computed using a mesh of size $h=\sqrt{2}\cdot2^{-6}$, which is small enough to resolve the multiscale coefficients in Figure~\ref{lame-multiscale}. The generalized finite element (GFEM) solution in \eqref{gfem-loc} is computed on several meshes of decreasing size, $H=\sqrt{2}\cdot 2^{-1},...,\sqrt{2}\cdot 2^{-5}$ with $k=1,1,2,2,3$, which corresponds to $k=\lceil 0.8\log H^{-1}\rceil$. These solutions are compared to the reference solution. For comparison we also compute the classical piecewise linear finite element (P1-FEM) solution on the meshes of size $H=\sqrt{2}\cdot 2^{-1},...,\sqrt{2}\cdot 2^{-5}$. The error is computed using the $H^1$ semi-norm $\|\nabla \cdot\|$ and plotted in Figure~\ref{plot-error}.

\begin{figure}[h]
	\centering
	\begin{subfigure}[b]{0.48\textwidth}
		\includegraphics[width=\textwidth]{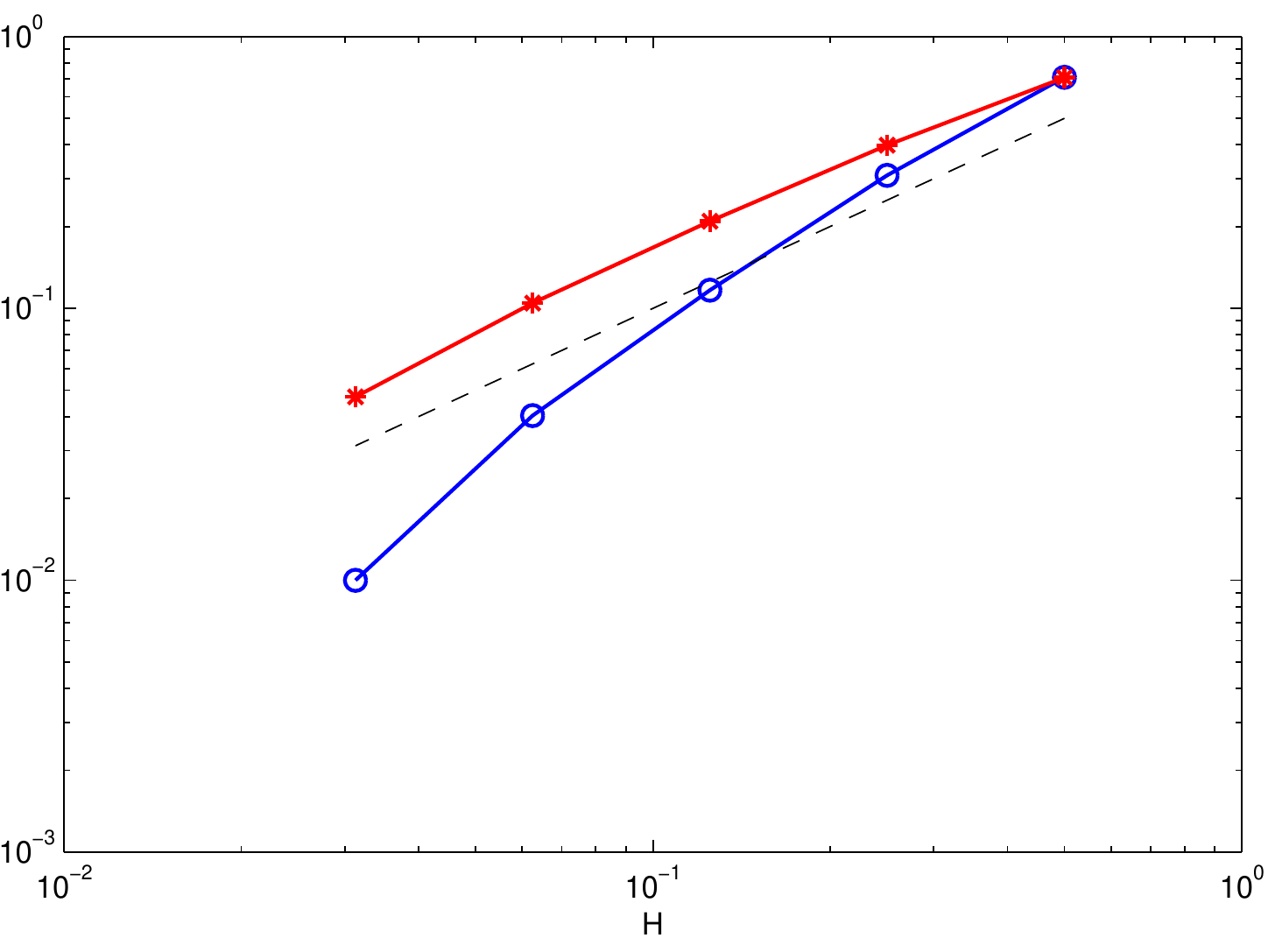}
		\caption{Constant coefficients $\mu=\lambda=1$.}
	\end{subfigure}
	~
	\begin{subfigure}[b]{0.48\textwidth}
		\centering
		\includegraphics[width=\textwidth]{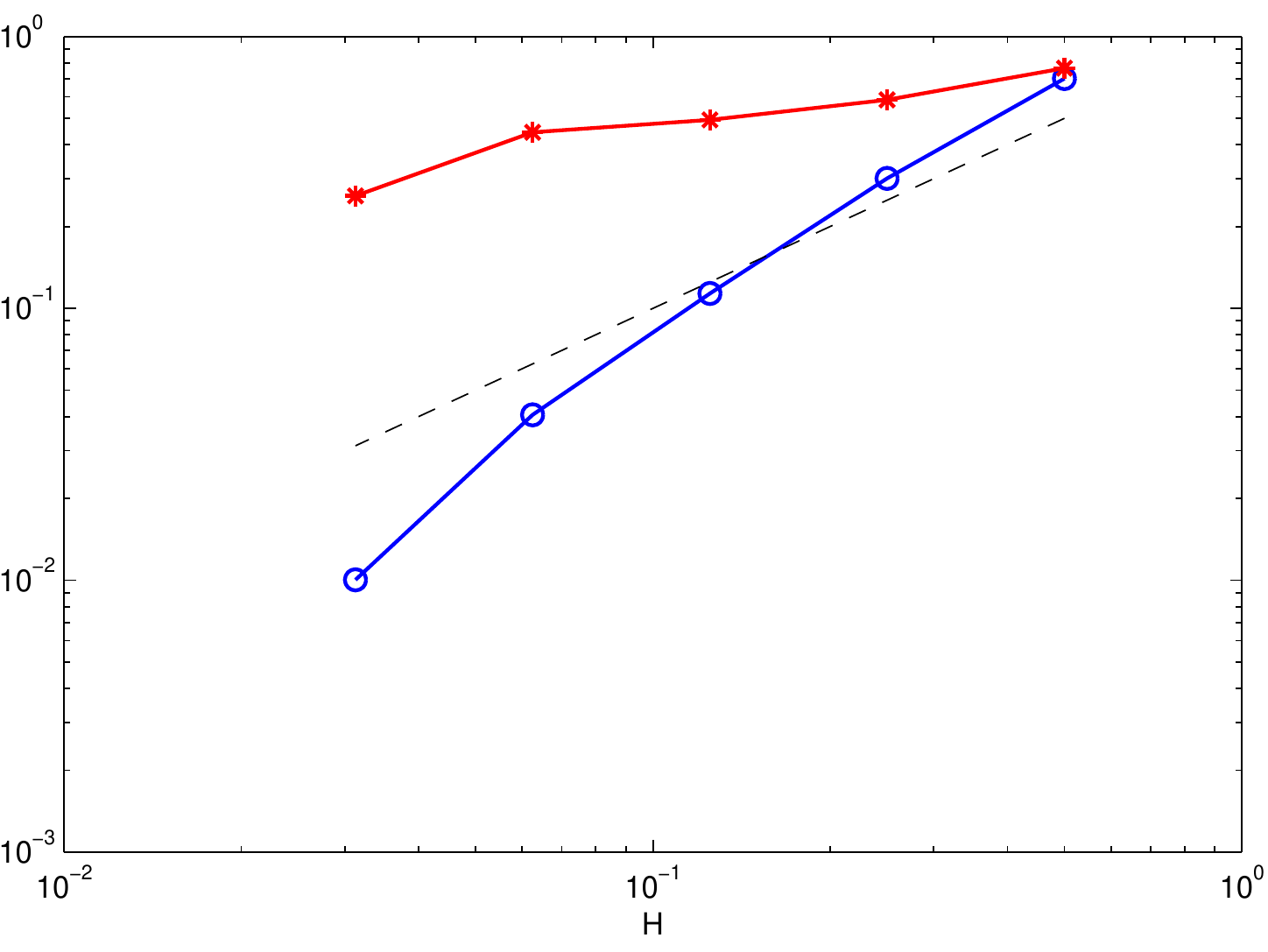}
		\caption{Multiscale coefficients, see Figure~\ref{lame-multiscale}.} 
	\end{subfigure}
	\caption{Relative errors using GFEM (blue $\circ$) and P1-FEM (red $\ast$) for the linear elasticity problem plotted against the mesh size $H$. The dashed line is $H$.}\label{plot-error}
\end{figure}

In Figure~\ref{plot-error} we see that both methods, as expected, show linear convergence for the problem with constant coefficients. For the problem with multiscale coefficients we clearly see the advantages with the generalized finite element method, which shows linear convergence also in this case, while the classical finite element shows far from optimal convergence. 

For the second experiment we aim to test the locking effect. We consider a problem from \cite{Brenner93}. The domain is set to the unit square $\Omega=[0,1]\times [0,1]$ and $g_D=0$ on the boundary $\Gamma_D=\partial \Omega$. Furthermore, with $\mu=1$ and the right hand side $f=[f_1 \ f_2]^\intercal$ chosen as
\begin{align*}
f_1 &= \pi^2\Big( 4\sin (2\pi y)(-1+2\cos(2\pi x)) -\cos \pi(x+y) +\frac{2}{1+\lambda}\sin(\pi x)\sin(\pi y)\Big),\\
f_2 &= \pi^2\Big( 4\sin (2\pi y)(1-2\cos(2\pi x)) -\cos \pi(x+y) +\frac{2}{1+\lambda}\sin(\pi x)\sin(\pi y)\Big),
\end{align*}
the exact solution $u=[u_1 \ u_2]^\intercal$ is given by
\begin{align*}
u_1 &= \sin (2\pi y)(-1+2\cos(2\pi x)) + \frac{1}{1+\lambda}\sin(\pi x)\sin(\pi y),\\
u_2 &= \sin (2\pi y)(1-2\cos(2\pi x)) + \frac{1}{1+\lambda}\sin(\pi x)\sin(\pi y).
\end{align*}
In this experiment we let $\lambda=10^3$. The discretization of the domain remain the same as in our first example, but the size of the reference mesh is set to $h=\sqrt{2}\cdot 2^{-7}$ which is sufficiently small for $u_h$ to be a relatively good approximation, since $h<1/\sqrt{\lambda}$. Indeed, using the knowledge of the exact solution we have $\|\nabla (\mathcal I_h(u)-u_h)\|_{L_2(\Omega)}/\|\nabla \mathcal I_h(u)\|_{L_2(\Omega)}\approx 0.15$, where $\mathcal I_h$ is the Lagrangian nodal interpolation onto $V_h$. 

The GFEM and the classical P1-FEM solutions are computed for the values $H=\sqrt{2}\cdot 2^{-1},...,\sqrt{2}\cdot 2^{-6}$. The localization parameter is chosen to be $k=1,1,2,2,3,4$ which corresponds to $k=\lceil 0.8\log H^{-1}\rceil$. The numerical approximations $u_{\ms,k}$ and $u_H$ are compared to the reference solution $u_h$ and the error is computed using the $H^1$-seminorm. The relative errors are plotted in Figure~\ref{plot-locking}. Clearly, the classical finite element method suffers from locking effects for the coarser mesh sizes. However, the generalized finite element solution shows linear convergence, that is, no locking effect is noted. 

\begin{figure}[h]
		\centering
		\includegraphics[width=0.5\textwidth]{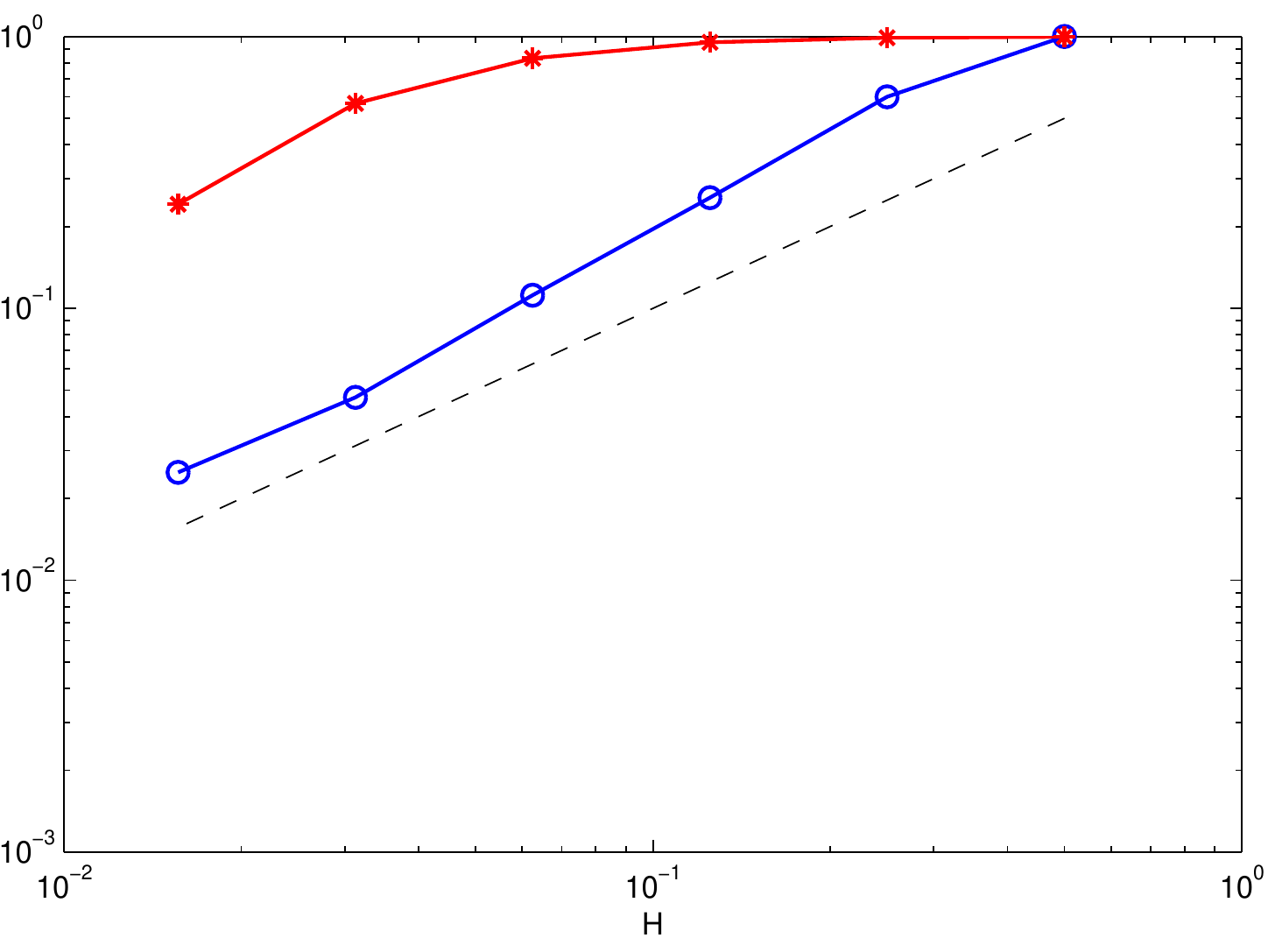}
	\caption{Relative errors for the locking problem using GFEM (blue $\circ$) and P1-FEM (red $\ast$)  plotted against the mesh size $H$. The dashed line is $H$.}\label{plot-locking}
\end{figure}

\bibliographystyle{abbrv}
\bibliography{elasticity_ref}

\begin{thebibliography}{10}

\bibitem{Adbulle06}
A.~Abdulle.
\newblock Analysis of a heterogeneous multiscale {FEM} for problems in
  elasticity.
\newblock {\em Math. Models Methods Appl. Sci.}, 16(4):615--635, 2006.

\bibitem{Abdulle14}
A.~Abdulle and P.~Henning.
\newblock Localized orthogonal decomposition method for the wave equation with
  a continuum of scales.
\newblock {\em ArXiv e-print 1406.6325, {\rm to appear in} Math. Comp.}, 2016+.

\bibitem{Arnold07}
D.~N. Arnold, R.~S. Falk, and R.~Winther.
\newblock Mixed finite element methods for linear elasticity with weakly
  imposed symmetry.
\newblock {\em Math. Comp.}, 76(260):1699--1723, 2007.

\bibitem{Babuska83}
I.~Babu{\v{s}}ka and J.~E. Osborn.
\newblock Generalized finite element methods: their performance and their
  relation to mixed methods.
\newblock {\em SIAM J. Numer. Anal.}, 20(3):510--536, 1983.

\bibitem{Babuska92}
I.~Babuska and M.~Suri.
\newblock Locking effects in the finite-element approximation of elasticity
  problems.
\newblock {\em Numerische Mathematik}, 62(4):439--463, 1992.

\bibitem{Brenner93}
S.~C. Brenner.
\newblock A nonconforming mixed multigrid method for the pure displacement
  problem in planar linear elasticity.
\newblock {\em SIAM J. Numer. Anal.}, 30(1):116--135, 1993.

\bibitem{BrennerScott}
S.~C. Brenner and R.~L. Scott.
\newblock {\em The mathematical theory of finite element methods}, volume~15 of
  {\em Texts in Applied Mathematics}.
\newblock Springer, New York, third edition, 2008.

\bibitem{BrennerSung92}
S.~C. Brenner and L.-Y. Sung.
\newblock Linear finite element methods for planar linear elasticity.
\newblock {\em Math. Comp.}, 59(200):321--338, 1992.

\bibitem{MR3043485}
F.~El~Halabi, D.~Gonz{\'a}lez, A.~Chico, and M.~Doblar{\'e}.
\newblock {${\rm FE}^2$} multiscale in linear elasticity based on parametrized
  microscale models using proper generalized decomposition.
\newblock {\em Comput. Methods Appl. Mech. Engrg.}, 257:183--202, 2013.

\bibitem{Henning16}
C.~Engwer, P.~Henning, A.~M{\aa}lqvist, and D.~Peterseim.
\newblock Efficient implementation of the localized orthogonal decomposition
  method.
\newblock ArXiv e-print 1602.01658, 2016.

\bibitem{Gallistl16}
D.~Gallistl and D.~Peterseim.
\newblock Stable multiscale {P}etrov-{G}alerkin finite element method for high
  frequency acoustic scattering.
\newblock {\em Comput. Methods Appl. Mech. Engrg.}, 295:1--17, 2015.

\bibitem{GeG09}
J.~M. Gere and B.~J. Goodno.
\newblock {\em Mechanics of Material}.
\newblock Cengage Learning, 2008.

\bibitem{Henning14}
P.~Henning and A.~M{\aa}lqvist.
\newblock Localized orthogonal decomposition techniques for boundary value
  problems.
\newblock {\em SIAM J. Sci. Comput.}, 36(4):A1609--A1634, 2014.

\bibitem{HMP14}
P.~Henning, A.~M{\aa}lqvist, and D.~Peterseim.
\newblock A localized orthogonal decomposition method for semi-linear elliptic
  problems.
\newblock {\em ESAIM Math. Model. Numer. Anal.}, 48(5):1331--1349, 2014.

\bibitem{HMP14b}
P.~Henning, A.~M{\aa}lqvist, and D.~Peterseim.
\newblock Two-{L}evel {D}iscretization {T}echniques for {G}round {S}tate
  {C}omputations of {B}ose-{E}instein {C}ondensates.
\newblock {\em SIAM J. Numer. Anal.}, 52(4):1525--1550, 2014.

\bibitem{HeP13}
P.~Henning and D.~Peterseim.
\newblock Oversampling for the {M}ultiscale {F}inite {E}lement {M}ethod.
\newblock {\em SIAM Multiscale Model. Simul.}, 11(4):1149--1175, 2013.

\bibitem{Hughes98}
T.~J.~R. Hughes, G.~R. Feij{\'o}o, L.~Mazzei, and J.-B. Quincy.
\newblock The variational multiscale method---a paradigm for computational
  mechanics.
\newblock {\em Comput. Methods Appl. Mech. Engrg.}, 166(1-2):3--24, 1998.

\bibitem{HuS07}
T.~J.~R. Hughes and G.~Sangalli.
\newblock Variational multiscale analysis: the fine-scale {G}reen's function,
  projection, optimization, localization, and stabilized methods.
\newblock {\em SIAM J. Numer. Anal.}, 45(2):539--557, 2007.

\bibitem{Larson07}
M.~G. Larson and A.~M{\aa}lqvist.
\newblock Adaptive variational multiscale methods based on a posteriori error
  estimation: energy norm estimates for elliptic problems.
\newblock {\em Comput. Methods Appl. Mech. Engrg.}, 196(21-24):2313--2324,
  2007.

\bibitem{Mal11}
A.~M{\aa}lqvist.
\newblock Multiscale methods for elliptic problems.
\newblock {\em Multiscale Model. Simul.}, 9(3):1064--1086, 2011.

\bibitem{MP15}
A.~M{\aa}lqvist and A.~Persson.
\newblock Multiscale techniques for parabolic equations.
\newblock {\em Submitted}, 2015.

\bibitem{Malqvist14}
A.~M{\aa}lqvist and D.~Peterseim.
\newblock Localization of elliptic multiscale problems.
\newblock {\em Math. Comp.}, 83(290):2583--2603, 2014.

\bibitem{Malqvist13}
A.~M{\aa}lqvist and D.~Peterseim.
\newblock Computation of eigenvalues by numerical upscaling.
\newblock {\em Numer. Math.}, 130(2):337--361, 2015.

\bibitem{MR2229849}
A.~Masud and K.~Xia.
\newblock A variational multiscale method for inelasticity: application to
  superelasticity in shape memory alloys.
\newblock {\em Comput. Methods Appl. Mech. Engrg.}, 195(33-36):4512--4531,
  2006.

\bibitem{Mazzucato10}
A.~L. Mazzucato and V.~Nistor.
\newblock Well-posedness and regularity for the elasticity equation with mixed
  boundary conditions on polyhedral domains and domains with cracks.
\newblock {\em Arch. Ration. Mech. Anal.}, 195(1):25--73, 2010.

\bibitem{MoR09}
P.~H. Mott and C.~M. Roland.
\newblock Limits to poisson's ratio in isotropic materials.
\newblock {\em Physical Review B}, 80(13), OCT 2009.

\bibitem{Peterseim16}
D.~Peterseim.
\newblock Eliminating the pollution effect in helmholtz problems by local
  subscale correction.
\newblock {\em Submitted}, 2015.

\bibitem{Peterseim15}
D.~Peterseim.
\newblock Variational {M}ultiscale {S}tabilization and the {E}xponential
  {D}ecay of {F}ine-scale {C}orrectors.
\newblock {\em To appear}, 2015/16.

\bibitem{MR3407262}
B.~Xia and V.~H. Hoang.
\newblock High-dimensional finite element method for multiscale linear
  elasticity.
\newblock {\em IMA J. Numer. Anal.}, 35(3):1277--1314, 2015.

\end{thebibliography}
\end{document}